\newtheorem{thm}{Theorem}[section]
\newtheorem{cor}[thm]{Corollary}
\newtheorem{prop}[thm]{Proposition}
\theoremstyle{definition}
\newtheorem{eg}[thm]{Example}
\newtheorem{defn}[thm]{Definition}
\numberwithin{equation}{section}
\newcommand{\C}{\mathbb{C}}
\newcommand{\PP}{\mathbb{P}}
\newcommand{\F}{\mathbb{F}}
\newcommand{\Q}{\mathbb{Q}}
\newcommand{\Z}{\mathbb{Z}}
\newcommand{\cO}{\mathcal{O}}
\newcommand{\cB}{\mathcal{B}}
\newcommand{\Pnum}{P_{\mathrm{num}}}
\newcommand{\Porb}{P_{\mathrm{orb}}}
\newcommand{\Pini}{P_{\mathrm{ini}}}
\newcommand{\abs}[1]{\left\vert{#1}\right\vert}
\newcommand{\modb}[1]{\left(\mathrm{mod}\ {#1}\right)}
\newcommand{\qfact}{\hbox{$\Q$-factorial}}
\newcommand{\tV}{{\widetilde{V}}}
\renewcommand{\phi}{\varphi}
\DeclareMathOperator{\wt}{wt}
\DeclareMathOperator{\Gr}{Gr}
\DeclareMathOperator{\CGr}{CGr}
\DeclareMathOperator{\OGr}{OGr}
\DeclareMathOperator{\Pf}{Pf}
\DeclareMathOperator{\Proj}{Proj}
\DeclareMathOperator{\Spec}{Spec}
\newcommand{\GIT}{/\!\!/}
\newcommand{\evnrow}{\rowcolor[gray]{0.95}}
\newcommand{\oddrow}{}
\begin{document}
\author[G.~Brown]{Gavin Brown}
\address{School of Mathematics\\Loughborough University\\Loughborough\\LE$11$\ $3$TU\\UK}
\email{G.D.Brown@lboro.ac.uk}
\author[A.~M.~Kasprzyk]{Alexander Kasprzyk}
\address{Department of Mathematics\\Imperial College London\\London\\SW$7$\ $2$AZ\\UK}
\email{a.m.kasprzyk@imperial.ac.uk}
\author[L.~Zhu]{Lei Zhu}
\address{Fudan University\\Shanghai\\China}
\email{051018003@fudan.edu.cn}
\title{Gorenstein formats, canonical and Calabi--Yau threefolds}
\begin{abstract}
We extend the known classification of threefolds of general type that are complete intersections to various classes of non-complete intersections, and find other classes of polarised varieties, including Calabi--Yau threefolds with canonical singularities, that are not complete intersections. Our methods apply more generally to construct orbifolds described by equations in given Gorenstein formats.
\end{abstract}
\maketitle
\section{The equations of canonical threefolds}
In this paper, a \emph{threefold} is a complex three-dimensional projective variety with \qfact\ canonical singularities, and a \emph{canonical threefold} is one that has ample canonical class. By the minimal model program, any threefold of general type is birational to a unique canonical threefold, its \emph{canonical model}, and so for birational classification it is enough to classify canonical threefolds. As outlined by Corti and Reid~\cite{CR00}, an explicit classification of varieties begins to do this for varieties that can be described by small sets of equations. Complete intersections in projective space provide many examples---see the original geographical considerations and map of Persson~\cite[\S2]{persson} or the Calabi--Yau map of Candelas, Lynker, and Schimmrigk~\cite{CLS}---but here we are interested in other cases.

Given a threefold $V$, its canonical model is $X=\Proj R(V,K_V)$ where the can\-onical ring $R(V,K_V)=\bigoplus_{m\ge 0} H^0(V,mK_V)$. For example, a nonsingular sextic hypersurface $X_6\subset\PP^4$ is a canonical threefold, and $R(X,K_X)$ is isomorphic to its homogeneous coordinate ring. Since the canonical ring is rarely generated in degree one, canonical threefolds often lie in weighted projective space: the double cover of $\PP^3$ branched in a nonsingular surface of degree ten is a hypersurface $X_{10}\subset\PP(1,1,1,1,5)$ and $R(X,K_X)$ is generated by these weighted homogeneous coordinates, graded in degrees $1, 1, 1, 1, 5$. Iano-Fletcher~\cite[Table~3]{Fletcher} lists $23$ families of such weighted canonical hypersurfaces, the most exotic being $X_{46} \subset \PP(4,5,6,7,23)$, and Chen, Chen, and Chen~\cite{CCC} show that this is the complete list of canonical hypersurfaces.

In this paper we find new families of canonical threefolds that are not complete intersections, and we provide a systematic method for computing all cases up to a prescribed bound. In the notation of~\S\ref{sec!formats} we obtain the following result:

\begin{thm}\label{thm!main}
There are $18$ families of canonical threefolds whose general member embeds pluricanonically in six-dimensional weighted projective space as a codimension three subvariety with equations in weighted Grassmannian $\Gr(2,5)$ format. These families are described in Table~\ref{tab!codim3}.
\end{thm}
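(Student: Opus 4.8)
The plan is to reduce the classification to a finite numerical search over candidate Hilbert series, and then to establish geometric existence case by case.

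First, I would record that each such family is pinned down by a short list of numerical invariants. Since the polarisation is the canonical class and $\cO_X(1)=K_X$, the graded ring $R(X,K_X)$ has Hilbert series $P_X(t)=\sum_{m\ge0}h^0(X,mK_X)\,t^m$ computable by orbifold Riemann--Roch (Reid's formula) from the degree $K_X^3$, the value $\chi(\cO_X)$, and the basket $\cB$ of isolated orbifold points. For a threefold of general type these invariants satisfy the standard inequalities, and imposing the prescribed bound (on $K_X^3$, equivalently on the ambient weights $a_0,\dots,a_6$ of $\PP(a_0,\dots,a_6)$) makes the set of admissible invariants finite. Enumerating this set is the first step.

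Second, I would impose the $\Gr(2,5)$ format on the Hilbert numerator. By the Buchsbaum--Eisenbud structure theorem a codimension-three Gorenstein subvariety is cut out by the $4\times4$ Pfaffians of a $5\times5$ antisymmetric matrix, so its minimal free resolution is the self-dual complex
\[
0\to\cO(-s)\to\bigoplus_{j=1}^5\cO(-b_j)\to\bigoplus_{i=1}^5\cO(-a_i)\to\cO\to\cO_X\to0,
\]
with Gorenstein symmetry $a_i+b_i=s$, where $s$ is forced by the adjunction condition $K_X=\cO_X(1)$. Clearing denominators, $P_X(t)\cdot\prod_{k=0}^6(1-t^{a_k})$ must equal the Pfaffian numerator $1-\sum_i t^{a_i}+\sum_j t^{b_j}-t^s$. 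For each admissible invariant set I would solve for the ambient weights together with the matrix weights $\wt(m_{ij})$, discarding any solution in which these fail to be positive integers, or in which the numerator is not a genuine $\Gr(2,5)$-format numerator. This produces the finite list of numerically admissible candidates.

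Third---and this is the main obstacle---I would show that each surviving candidate is realised by an actual canonical threefold, and that exactly $18$ candidates are. This step is not formal. One must exhibit a $5\times5$ antisymmetric matrix of weighted forms whose general specialisation defines a quasismooth $X\subset\PP(a_0,\dots,a_6)$ of $\codim 3$ with only the prescribed canonical singularities, and verify that the embedding is projectively normal and genuinely pluricanonical, so that $R(X,K_X)$ really is the homogeneous coordinate ring. Quasismoothness and the singularity analysis along each orbifold stratum are the delicate points, settled by a direct check on the general member using the explicit format; candidates that are numerically admissible but whose general member is forced to be singular in the wrong way, or to collapse to a smaller codimension, must be eliminated here. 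Finally I would confirm that the resulting varieties are not complete intersections, completing the match with Table~\ref{tab!codim3}.
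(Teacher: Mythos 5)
The decisive gap is in your first step. You assert that ``imposing the prescribed bound'' on $K_X^3$, equivalently on the ambient weights, makes the set of admissible invariants finite, and you then aim to conclude that \emph{exactly} $18$ candidates are realised. But Theorem~\ref{thm!main} prescribes no such bound: it fixes only the dimension of the ambient space and the format. Without a bound the enumeration is genuinely infinite---the gradings $\chi$ on $\CGr(2,5)$ form an infinite family, searched in increasing adjunction number---and the paper is explicit that this search ``does not have a stopping condition and we cannot know if or when all solutions have been found.'' Accordingly the paper does not prove, and explicitly disclaims proving, that its $18$ families exhaust codimension three; the theorem is an existence statement for the families of Table~\ref{tab!codim3}, each shown to be realised by an actual quasismooth canonical threefold, and the count $18$ is the content of the table, not a proven classification. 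Your proposal as written would prove at best ``exactly $18$ within an artificial bound,'' which is a different statement. A smaller but related overclaim: Buchsbaum--Eisenbud gives $2n\times 2n$ Pfaffians of a $(2n+1)\times(2n+1)$ matrix for \emph{some} $n$, not necessarily $5\times 5$, so even within a bound you cannot identify ``codimension three Gorenstein'' with ``$\Gr(2,5)$ format'' as your second step does.

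Two further points of comparison. First, your numerical search runs in the opposite direction to the paper's: you choose invariants $(K_X^3,\chi(\cO_X),\cB)$ first, compute the Hilbert series by Riemann--Roch, then solve for weights---this is the approach of Corti--Reid and Qureshi--Szendr{\H o}i, which the paper deliberately inverts. The paper fixes the format and ambient weights first, so that $P_X$ is known exactly from Proposition~\ref{prop!PX}, and only then matches baskets; Theorem~\ref{thm!C_coeffs} (every $\Porb$ contribution of a canonically polarised terminal quotient singularity is strictly negative) turns that matching into a finite knapsack problem, whereas in your direction there is no analogous a priori finiteness for the set of baskets and degrees. Second, your third step is where the theorem's real content lives, and there your plan does agree with the paper: existence is verified either by realising $X$ as a complete intersection inside a projective cone over the weighted Grassmannian and applying Bertini's theorem while tracking base loci on orbifold strata (as in the worked cases $4$, $6$, $12$), or by exhibiting an explicit random member over $\Q$ (as for number $18$) and checking that its Jacobian ideal defines the empty set. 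Note, though, that projective normality and the identification of $R(X,K_X)$ with the coordinate ring need not be checked afterwards as you suggest: in the paper's construction $X$ is $\Proj$ of a Gorenstein regular pullback, with $\omega_X=\cO_X(1)$ by Proposition~\ref{prop!PX} and the ring-theoretic properties supplied by Theorem~\ref{thm!GW}, so these come for free.
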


Our method is based on the orbifold Riemann--Roch formula of Buckley, Reid, and Zhou \cite{BRZ}, which we state in our context as Theorem~\ref{thm!icecream}. We show that the terminal singularities arising on canonical threefolds make strictly positive contributions to this formula (Theorem~\ref{thm!C_coeffs}), which bounds the number of possible baskets of singularities for given invariants.

It seems likely that the $18$ families of Theorem~\ref{thm!main} realise all canonical threefolds in codimension three, apart from complete intersections and their degenerations, but we do not prove this. The main point is that these are not complete intersections, and so do not appear in~\cite{Fletcher}. We can certainly go much further with these constructions using different formats: Corti and Reid~\cite{CR} find an example in codimension five, and we find $21$ families in the same codimension five orthogonal Grassmannian format (Table~\ref{tab!codim5}). We also find $57$ families in codimension four arising as hypersurface sections through four-folds in $\Gr(2,5)$ format.

A \emph{Calabi--Yau threefold} is a threefold with $K_X=0$ and $h^1(X,\cO_X)=h^2(X,\cO_X)=0$ and canonical singularities. We restrict to orbifolds having only isolated orbifold points of the form $\frac{1}r(a,b,c)$ with $a+b+c\equiv0\ \modb{r}$; these are the isolated three-dimensional cyclic quotient singularities that admit crepant resolutions, so each of our examples has a resolution to a Calabi--Yau manifold. We use the same methods to describe families of Calabi--Yau threefolds in various formats. In contrast to canonical threefolds, in this case the Riemann--Roch contributions of singularities need not be linearly independent; for example, the pair $\frac{1}3(1,1,1)$ and $\frac{1}3(2,2,2)$ make opposite contributions. This rarely causes confusion in the low-codimensional models we describe, but it does mean our purely numerical arguments can at first sight have infinitely many possible baskets of singularities to report.

Another contrast with canonical threefolds is that lists of Calabi--Yau threefolds tend to be large. It is certainly not the case that the examples we find exhaust all possible Calabi--Yau threefolds in the formats we consider. Nevertheless there has been a great deal of work to describe Calabi--Yau threefolds, and our examples extend some known lists already in the literature, such as the nonsingular examples of Tonolli~\cite{tonoli} and Bertin~\cite{bertin}. Table~\ref{tab!summary} summarises our results; detailed lists are available online at~\cite{grdb}.

More generally these methods can be used to find examples of polarised $d$-dimensional orbifolds $X,A$ with prescribed canonical class $K_X=kA$, for any integer~$k$, that have isolated orbifold locus. This restriction is imposed only because we do not know the contribution to orbifold Riemann--Roch of higher-dimensional orbifold strata; but see~\cite{selig} for recent progress. We have computer code, written for the computational algebra system Magma~\cite{magma} and available for download at~\cite{grdb}, that can make such searches systematically.

Goto and Watanabe~\cite{GW} describe graded rings of the type $R(X,D)$, computing their cohomology and characterising those that are Gorenstein, which includes all cases we consider:

\begin{thm}[\protect{\cite[5.1.9--11]{GW}}]
\label{thm!GW}
Let $X$ be a projective variety and $D$ an ample divisor. Set $R=R(X,D)$, the corresponding graded ring, so that $X=\Proj R$. If $R$ is Cohen--Macaulay then:
\begin{enumerate}
\item
$H^i(X,\cO_X)=0$ for $0<i<\dim X$;
\item
$R$ is Gorenstein if and only if $K_X = kD$ for some integer~$k$.
\end{enumerate}
\end{thm}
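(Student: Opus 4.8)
The plan is to pass through the irrelevant maximal ideal $\mathfrak{m}=\bigoplus_{m>0}R_m$ and exploit the standard dictionary (Serre--Grothendieck) between the local cohomology modules $H^i_{\mathfrak{m}}(R)$ and the sheaf cohomology of $X=\Proj R$. Since $R=R(X,D)$ is by construction the section ring $\bigoplus_{m\ge 0}H^0(X,\cO_X(mD))$, the graded comparison sequence degenerates: the natural map $R\to\bigoplus_n H^0(X,\cO_X(nD))$ is an isomorphism, so $H^0_{\mathfrak{m}}(R)=H^1_{\mathfrak{m}}(R)=0$, and one has graded isomorphisms
\[
H^{i}_{\mathfrak{m}}(R)_n\cong H^{i-1}(X,\cO_X(nD))\qquad(i\ge 2).
\]
First I would record this, since both parts then fall out once $R$ is Cohen--Macaulay.

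For (i), write $d=\dim X$, so that $\dim R=d+1$. The Cohen--Macaulay hypothesis says $H^i_{\mathfrak{m}}(R)=0$ for all $i\le d$. Feeding each $i$ with $2\le i\le d$ into the isomorphism above and reading off the degree-zero piece gives $H^{i-1}(X,\cO_X)=0$, that is, $H^j(X,\cO_X)=0$ for $0<j<d$, which is exactly~(i).

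For (ii) I would identify the graded canonical module $\omega_R$. Graded local duality expresses $\omega_R$ as the Matlis dual of the top local cohomology, $(\omega_R)_n\cong\big(H^{d+1}_{\mathfrak{m}}(R)_{-n}\big)^{\vee}$; combining this with the isomorphism above and Serre duality on the Cohen--Macaulay variety $X$ (where the dualizing sheaf $\omega_X$ exists and $H^d(X,\cO_X(-nD))^{\vee}\cong H^0(X,\omega_X(nD))$) yields
\[
(\omega_R)_n\cong H^0(X,\omega_X(nD)),\qquad\text{so}\qquad \omega_R\cong\bigoplus_n H^0(X,\omega_X\otimes\cO_X(nD)).
\]
A graded Cohen--Macaulay ring is Gorenstein precisely when $\omega_R\cong R(k)$ for some shift $k\in\Z$. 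Sheafifying on $X$ sends $\omega_R$ to $\omega_X$ and $R(k)$ to $\cO_X(kD)$, so a graded isomorphism $\omega_R\cong R(k)$ forces $\omega_X\cong\cO_X(kD)$, i.e.\ $K_X=kD$; conversely $\omega_X\cong\cO_X(kD)$ gives $\omega_X(nD)\cong\cO_X((n+k)D)$ compatibly in $n$, hence a degree-by-degree, $R$-linear identification $\omega_R\cong R(k)$. This gives the equivalence in~(ii).

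The main obstacle is bookkeeping in the possibly singular case: a priori $\omega_X$ is only the coherent dualizing sheaf and need not be invertible, so I must check that the desheafification in the last step recovers $\omega_R$ exactly rather than up to finite-length error in low degrees. This is guaranteed because local duality computes every graded piece $(\omega_R)_n$, not merely the sheafification; and the Gorenstein direction is self-correcting, since an isomorphism $\omega_R\cong R(k)$ already forces $\omega_X$ to be invertible, so that $X$ is Gorenstein and $K_X=kD$ makes sense as a Cartier class. The remaining points---that $R$ is finitely generated over the field $R_0=\C$, and that the comparison and duality isomorphisms are genuinely graded and $R$-linear---are routine but must be in place before the degree-by-degree arguments are run.
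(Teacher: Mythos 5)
The paper offers no proof of Theorem~\ref{thm!GW}: it is quoted directly from Goto--Watanabe~\cite{GW}, so the only comparison available is with that source, and your route---the comparison between graded local cohomology $H^i_{\mathfrak{m}}(R)$ and the cohomology of the sheaves $\cO_X(nD)$, followed by graded local duality to compute $\omega_R$ degree by degree---is exactly the machinery that the cited paper develops and uses. Your part (i) is correct and complete: Cohen--Macaulayness kills $H^i_{\mathfrak{m}}(R)$ for $i<\dim R=\dim X+1$, and the degree-zero piece of the isomorphism $H^i_{\mathfrak{m}}(R)_n\cong H^{i-1}(X,\cO_X(nD))$ for $2\le i\le \dim X$ gives the vanishing. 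The skeleton of part (ii) is also right: local duality plus top-degree Serre duality identifies $(\omega_R)_n\cong H^0(X,\cO_X(K_X+nD))$, and Gorenstein means Cohen--Macaulay with $\omega_R\cong R(k)$.

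There is, however, one genuinely false step, and it is false precisely in the situations this paper applies the theorem to. You claim that an isomorphism $\omega_R\cong R(k)$ ``already forces $\omega_X$ to be invertible, so that $X$ is Gorenstein and $K_X=kD$ makes sense as a Cartier class.'' It does not: sheafifying gives $\omega_X\cong\cO_X(kD)$ only as reflexive (divisorial) sheaves, and $\cO_X(kD)$ need not be invertible when $D$ is Weil but not Cartier. Concretely, $R=\C[x,y,z]$ with weights $(1,1,3)$ is regular, hence Gorenstein, yet $X=\Proj R=\PP(1,1,3)$ has a $\frac{1}{3}(1,1)$ quotient singularity that is not Gorenstein and $\omega_X=\cO_X(-5)$ is not invertible; likewise every threefold in Table~\ref{tab!codim3} with nonempty basket has Gorenstein canonical ring but non-Gorenstein terminal quotient points $\frac{1}{r}(-1,a,-a)$. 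The conclusion $K_X=kD$ in the theorem must be read as an equality of Weil divisor classes (linear equivalence), and your argument does go through once everything is interpreted reflexively: $\cO_X(nD)$ and $\omega_X(nD):=\cO_X(K_X+nD)$ are divisorial sheaves on the normal variety $X$ (normality of $X$, or equivalently working in the Demazure--Watanabe formalism for ample $\Q$-Cartier Weil divisors, is a hypothesis you also need in order to justify $\widetilde{R(n)}\cong\cO_X(nD)$ and hence your comparison isomorphism). With that reading, $\omega_R\cong R(k)$ holds if and only if the class of $K_X$ is trivial in $\operatorname{Cl}(X)/\Z[D]$, i.e.\ if and only if $K_X\sim kD$ for some integer $k$, which is the intended statement. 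So: right approach, correct part (i), but the ``self-correcting'' remark in part (ii) must be replaced by the Weil-divisor bookkeeping rather than waved away.
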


\section{Formats and candidate varieties}
\subsection{Regular pullbacks from key varieties}\label{sec!formats}
A format describes a presentation of the equations of a variety, for example by saying that the equations are minors of some matrix. Informal notions of format for polynomial equations appear regularly, sometimes describing a component of a Hilbert scheme or capturing some other feature of the geometry, and there are more formal prescriptions such as~\cite[\S12]{stevens}. We define format to suit our applications, loosely following Dicks and Reid~\cite[Theorem~3.3]{dicksreid},
~\cite[\S1.5]{fun}:

\begin{defn}
A \emph{Gorenstein format $F$ of codimension $c$} is a triple $(\tV,\chi,\F)$ consisting of:
\begin{enumerate}
\item
A Gorenstein (in particular, Cohen--Macaulay) affine variety $\tV\subset\C^n$ of codimension~$c$, which we refer to as the \emph{key variety} of the format;
\item
\label{cond:wt}
A diagonal $\C^*$ action on $\tV$ with strictly positive weights $\chi$, which we refer to as the \emph{key weights} of the format;
\item
\label{cond:res}
A graded minimal free resolution $\F$ of $\cO_{\tV}$ as a graded $\cO_{\C^n}$-module.
\end{enumerate}
\end{defn}

The $\C^*$ actions on $\C^n$ that are compatible with its toric structure are parametrised by the character lattice $N_{\C^n} =\Z^n$, and the positive actions are those lying strictly in the positive quadrant $Q\subset N_{\C^n}$. A subset $\Lambda\subset N_{\C^n}$ of these actions leave $\tV$ invariant, and condition~\eqref{cond:wt} asserts that $\Lambda\cap Q$ is not empty. We need a little more: that the given free resolution $\F$ is equivariant for the action. In many cases we consider the key variety has monomial syzygies, so the homogeneity of the equations of $\tV$ is enough, and $\Lambda\cap Q$ is some (infinite) polyhedron in $Q$. We then iterate over the formats by enumerating the points of $\Lambda\cap Q$.

Condition~\eqref{cond:res} determines the \emph{Hilbert numerator} $\Pnum(t)$ of the format: $\Pnum(t) = 1 - \sum t^{d_i} + \sum t^{e_j} - \cdots + (-1)^ct^k$, where $d_i$ are the degrees of the equations, $e_j$ the degrees of the first syzygies, and so on, and $k$ is the adjunction number of $\F$. This polynomial has Gorenstein symmetry: $t^k\Pnum(1/t) = (-1)^c \Pnum(t)$. It determines the Hilbert series, as in Proposition~\ref{prop!PX} below.

One could imagine other definitions of format, both weaker and stronger, but this one is well adapted to our applications.

Let $F=(\tV,\chi,\F)$ be a Gorenstein format of codimension~$c$. We construct Gorenstein varieties $X\subset \PP^{d+c}(W)$ of codimension~$c$ and dimension $d$ in weighted projective space, with weights $W$, as \emph{regular pullbacks}, which we recall from~\cite[\S1.5]{fun}:

\begin{prop}\label{prop!reg}
Let $(\tV\subset\C^n,\chi,\F)$ be a Gorenstein format of codimension~$c$. Let $R$ be a polynomial ring and $\phi\colon \Spec R \rightarrow \C^n$ a morphism. The following are equivalent:
\begin{enumerate}
\item
$\phi^{-1}(\tV)\subset\Spec R$ has codimension~$c$;
\item
The pullback of $\F$ by $\phi$ is a free resolution of $R$-modules;
\item
$x_i - \phi^*(x_i)$ for $i=1,\dots, n$ form a regular sequence on $\Spec R\times\C^n$, where $x_1,\dots,x_n$ are the coordinates of $\C^n$.
\end{enumerate}
If these conditions hold then $\phi^{-1}(\tV)\subset\Spec R$ is called a \emph{regular pullback of $\tV$}, and is a Gorenstein affine variety. Furthermore, if $\phi$ is graded of degree zero then the pullback of $\F$ by $\phi$ is a graded minimal free resolution of $R$-modules with the same Hilbert numerator as $\F$.
\end{prop}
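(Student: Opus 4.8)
The plan is to realise $\phi^{-1}(\tV)$ as a linkage inside a product and to reduce everything to the interplay between Koszul homology and the Cohen--Macaulay property. Write $A=\C[x_1,\dots,x_n]$ for the coordinate ring of $\C^n$ and $I\subset A$ for the ideal of $\tV$, so that $\cO_{\tV}=A/I$ and $\F$ is a length-$c$ free resolution of $A/I$ over $A$. Let $B=R\otimes_\C A$, the coordinate ring of $\Spec R\times\C^n$, and set $I_B=IB$ (the ideal of $\Spec R\times\tV$) and $J=(x_1-\phi^*(x_1),\dots,x_n-\phi^*(x_n))$ (the ideal of the graph $\Gamma_\phi\cong\Spec R$). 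I would first record the identifications $B/J\cong R$, $B/(I_B+J)\cong R/\phi^*(I)R=\cO_{\phi^{-1}(\tV)}$, and $\phi^*\F\cong(\F\otimes_A B)\otimes_B(B/J)$: the pulled-back complex is the flat base change $\F_B:=\F\otimes_A B$, still a resolution of $B/I_B$, restricted to the graph.

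Next I would establish the equivalence of (ii) and (iii) by computing $\operatorname{Tor}^B_\bullet(B/I_B,B/J)$ in two ways. Resolving the first factor by $\F_B$ gives homology $H_\bullet(\phi^*\F)$, whose vanishing in positive degrees is exactly condition (ii). Since $J$ is generated by the $B$-regular sequence $x_i-\phi^*(x_i)$ (a linear change of the $\C^n$-coordinates), resolving the second factor by the corresponding Koszul complex gives the Koszul homology of $x_1-\phi^*(x_1),\dots,x_n-\phi^*(x_n)$ on the module $\cO_{\Spec R\times\tV}=B/I_B$, whose vanishing in positive degrees is condition (iii). Hence (ii) $\Leftrightarrow$ (iii).

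I would then prove (i) $\Leftrightarrow$ (iii) by dimension theory. The ring $B/I_B=R\otimes_\C\cO_{\tV}$ is a polynomial extension of the Cohen--Macaulay ring $\cO_{\tV}$, hence Cohen--Macaulay, of dimension $\dim R+(n-c)$. Cutting it by the $n$ generators of $J$ produces $\cO_{\phi^{-1}(\tV)}$, and in a Cohen--Macaulay ring a sequence of $n$ elements is regular precisely when it drops the dimension by the full $n$; that is, exactly when $\dim\phi^{-1}(\tV)=\dim R-c$, i.e. $\codim\phi^{-1}(\tV)=c$. (By Krull's theorem the codimension can never exceed $c$, so this is the assertion that it attains its expected value.) This is the crux of the proposition, and the step I expect to be the main obstacle: it is the Cohen--Macaulayness of the key variety that converts the purely numerical condition (i) into the homological conditions (ii) and (iii). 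The regularity criteria used in both steps require the $x_i-\phi^*(x_i)$ to lie in the relevant maximal or irrelevant ideal; this is automatic in the graded setting, where they are homogeneous of the strictly positive key weights $\chi_i$, and is obtained by localising at points of $\phi^{-1}(\tV)$ in general.

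Finally, granting the equivalent conditions, $\phi^*\F$ is a length-$c$ free resolution of $\cO_{\phi^{-1}(\tV)}$, so this ring is perfect, hence Cohen--Macaulay of codimension $c$; since $\tV$ is Gorenstein, the last free module in $\F$ has rank one and $\F$ is self-dual up to shift and twist, and both properties survive the base change and restriction above, so $\phi^{-1}(\tV)$ is Gorenstein. For the graded statement, a degree-zero $\phi$ makes $\phi^*\F=\F\otimes_A R$ a graded complex with the same free modules and twists as $\F$, whence the same Hilbert numerator $\Pnum(t)$; and because each $\phi^*(x_i)$ has strictly positive degree $\chi_i$, the entries of the pulled-back differentials lie in the irrelevant ideal of $R$, so the resolution is minimal.
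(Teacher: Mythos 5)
The paper does not actually prove this proposition: it is recalled without proof from Reid~\cite[\S1.5]{fun}, and your argument is in substance the standard one from that source---realise $\phi^{-1}(\tV)$ as the intersection of the graph $\Gamma_\phi$ with $\Spec R\times\tV$, identify (ii) with (iii) by computing $\operatorname{Tor}^B_\bullet(B/I_B,B/J)$ with either factor resolved (flat base change of $\F$ versus the Koszul complex on the graph ideal), and identify (i) with (iii) through Cohen--Macaulayness of $R\otimes_\C\cO_{\tV}$; the Gorenstein conclusion via self-duality of $\F$, and the minimality and Hilbert-numerator claims via strict positivity of the weights, are also handled correctly. Two remarks. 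First, condition (iii) as literally printed (``regular sequence on $\Spec R\times\C^n$'') is vacuous: by the coordinate change you yourself invoke, the elements $x_i-\phi^*(x_i)$ are \emph{always} a regular sequence on the product with affine space. Your reading of (iii) as regularity on $\Spec R\times\tV$, i.e.\ on $B/I_B$, is the intended nontrivial one, and it is worth saying so explicitly. Second, you are right to flag that both regularity criteria you use (vanishing of higher Koszul homology, and dimension drop by $n$ in a Cohen--Macaulay ring) characterise regular sequences only under local or graded hypotheses; but note that your fallback ``obtained by localising at points of $\phi^{-1}(\tV)$ in general'' does not by itself repair the non-graded case, since being a regular sequence---unlike exactness of a complex---is not a condition that can be checked locally (a sequence can have vanishing Koszul homology without being regular in the given order, and in the affine non-graded setting components can escape the later intersections so that the Krull dimension count gives no contradiction). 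In the only setting the paper ever uses---$\chi$ and $W$ strictly positive and $\phi$ graded of degree zero, so that every scheme in sight is a cone through the relevant origin---graded Nakayama and the automatic nonemptiness of the intersections validate both criteria, and there your proof is complete.
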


Fix any integer $d>0$, the dimension of the varieties $X$ that we seek. Let $F=(\tV,\chi,\F)$ be a Gorenstein format of codimension~$c$ and fix a graded polynomial ring $R$ with $d+c+1$ variables and strictly positive weights~$W$. If $\phi\colon\Spec R \rightarrow\C^n$ is graded of degree zero and $\phi^{-1}(\tV)\subset \Spec R$ is a regular pullback containing the origin $O\in\Spec R$, then we define the \emph{projectivised regular pullback} to be
\[
X =
\phi^{-1}(\tV)\GIT_{\! W}\,\C^* =
\left(\phi^{-1}(\tV)
\setminus O\right)/\C^*\subset\PP(W).
\]
The next proposition follows immediately: the Hilbert series of $X$ is determined by the graded Betti numbers of a free resolution, and since $\phi$ satisfies the conditions of Proposition~\ref{prop!reg} and has degree zero, the graded Betti numbers are exactly those of $\F$ with grading~$\chi$.

\begin{prop}\label{prop!PX}
Let $F=(\tV\subset\C^n,\chi,\F)$ be a Gorenstein format of codimension~$c$, $R$ a polynomial ring graded by strictly positive weights $W$ with a morphism $\phi\colon\Spec R \rightarrow\C^n$ graded of degree zero. Then every projectivised regular pullback $X\subset\PP(W)$ has Hilbert series
\[
P_X(t) = \Pnum(t) \Big/ \prod_{a\in W} (1-t^a)
\]
where $\Pnum(t)$ is the Hilbert numerator of the format $F$.

If, in addition, $X$ is an irreducible variety that is well-formed as a subvariety of $\PP(W)$ then the canonical sheaf of $X$ is $\omega_X = \cO_X(k_{\tV}-\alpha)$, where $\alpha$ is the sum of the weights $W$ and $k_{\tV}=\deg\Pnum(t)$ is the adjunction number of $\F$.
\end{prop}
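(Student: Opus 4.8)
The plan is to work throughout with the homogeneous coordinate ring $S=R/I$, where $I$ is the ideal defining the regular pullback $\phi^{-1}(\tV)\subset\Spec R$, so that $\Spec S$ is the affine cone over $X$ and $X=\Proj S$. With this identification $P_X(t)=P_S(t)$ by definition, and the first statement becomes bookkeeping with graded free resolutions. By Proposition~\ref{prop!reg}, since $\phi$ is graded of degree zero and the pullback is regular, $\phi^*\F$ is a graded minimal free resolution of $S$ as an $R$-module with the same numerator as $\F$, namely
\[
0 \to R(-k) \to \cdots \to \bigoplus_j R(-e_j) \to \bigoplus_i R(-d_i) \to R \to S \to 0.
\]
First I would record that the polynomial ring $R$ with weights $W$ has Hilbert series $\prod_{a\in W}(1-t^a)^{-1}$, so a shifted free module $R(-m)$ has Hilbert series $t^m\prod_{a\in W}(1-t^a)^{-1}$. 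Taking the alternating sum of Hilbert series along the exact resolution (Euler characteristics are additive) collects the shifts into $\Pnum(t)=1-\sum t^{d_i}+\sum t^{e_j}-\cdots+(-1)^c t^k$, giving $P_X(t)=\Pnum(t)\big/\prod_{a\in W}(1-t^a)$.

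For the canonical sheaf I would compute the graded canonical (dualizing) module of $S$ and then sheafify. By Proposition~\ref{prop!reg} the pullback $S$ is Gorenstein of codimension~$c$ in the regular ring $R$, so $\omega_S\cong\operatorname{Ext}^c_R(S,\omega_R)$ as graded modules, where the canonical module of the weighted polynomial ring is $\omega_R=R(-\alpha)$ with $\alpha=\sum_{a\in W}a$. The key computation is that the resolution $\phi^*\F$ is self-dual up to the shift by $k=k_{\tV}=\deg\Pnum$: this is the module-level content of the Gorenstein symmetry $t^k\Pnum(1/t)=(-1)^c\Pnum(t)$, which matches each $F_i$ with $F_{c-i}(k)$ after dualizing and twisting. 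Dualizing therefore yields $\operatorname{Ext}^c_R(S,R)\cong S(k)$, whence $\omega_S\cong S(k)(-\alpha)=S(k-\alpha)$.

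It remains to pass from the graded module $\omega_S$ to the canonical sheaf of the projective variety, and this is where the hypotheses that $X$ be irreducible and well-formed in $\PP(W)$ enter. Irreducibility (hence reducedness) ensures $S$ is the honest coordinate ring carrying a genuine dualizing module, while well-formedness guarantees that $X$ meets the singular locus of $\PP(W)$ in codimension at least two, so that sheafifying $S(k-\alpha)$ introduces no correction from the quotient singularities of the ambient weighted space and $\omega_X=\cO_X(k_{\tV}-\alpha)$. I expect this sheafification to be the main obstacle: the Ext computation and the Hilbert series are formal, but correctly reconciling the graded-module twist with $\cO_X(1)$ on a weighted projective variety---and verifying that well-formedness is exactly the condition removing the ambient orbifold corrections in adjunction---is the delicate point, which I would close by appealing to the adjunction formula for well-formed quasi-smooth subvarieties of $\PP(W)$ in the style of Iano-Fletcher~\cite{Fletcher}.
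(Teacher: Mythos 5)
Your proposal is correct and takes essentially the same approach as the paper: the paper derives the Hilbert series exactly as you do, noting that by Proposition~\ref{prop!reg} the degree-zero regular pullback of $\F$ is a graded free resolution of the coordinate ring with the same graded Betti numbers, so the alternating sum gives $\Pnum(t)\big/\prod_{a\in W}(1-t^a)$. For the canonical sheaf the paper offers no separate argument (it treats the whole proposition as immediate), and your Gorenstein-duality computation $\omega_S\cong S(k_{\tV}-\alpha)$ together with the well-formedness step for sheafification is precisely the standard reasoning it implicitly invokes.
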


Recall that $X\subset\PP(W)$ is well formed if the intersection of $X$ with any non-trivial orbifold locus of $\PP(W)$ has codimension at least two in $X$; see~\cite[Definition~6.9]{Fletcher}.

\begin{defn}
A \emph{candidate variety} is a format $F = (\tV,\chi,\F)$ of codimension~$c$ together with a morphism $\phi\colon \Spec R \rightarrow \C^n$ of degree zero from a graded polynomial ring $R$ that satisfies the equivalent conditions of Proposition~\ref{prop!reg}. A candidate variety is \emph{well-formed} if the projectivised regular pullback $X\subset\PP(W)$ is well-formed as a subvariety.
\end{defn}

Approximately speaking, we think of a candidate variety as representing general members of a family of varieties in a common weighted projective space whose equations and syzygies are modelled on a common free resolution $\F$. The condition only asks for a single map, although in the practical situations we encounter below any sufficiently general map will work. The space of maps $\Spec R \rightarrow \C^n$ of degree zero that give regular pullbacks may have more than one component, but we do not consider this question at all.

\begin{eg}\label{sec!pfaff}
Following~\cite{CR}, let $\tV = \CGr(2,5)\subset\C^{10}$ be the affine cone over the Grassmannian $\Gr(2,5)$ in its Pl\"ucker embedding. The equations of $\tV$ are the maximal Pfaffians of a generic skew $5\times 5$ matrix
\[
M =\begin{pmatrix}
x_1&x_2&x_3&x_4\\
&x_5&x_6&x_7\\
&&x_8&x_9\\
&&&x_{10}
\end{pmatrix}
\]
(we write only the strict upper-triangular part of such matrices). These equations are homogeneous with respect to a five-parameter system of weights $\Z^5=\Lambda\subset\Z^{10}$, which one can determine by enforcing homogeneity of these Pfaffians.

We can use $\tV$ as a key variety to find K3 surfaces. Let $\chi = (3,4,4,5,5,5,6,6,7,7)\in\Lambda$, which we understand better in matrix form as
\[
\chi =\begin{pmatrix}
3&4&4&5\\
&5&5&6\\
&&6&7\\
&&&7
\end{pmatrix}.
\]
This has Hilbert numerator
\[
\Pnum = 1 - t^9 - 2t^{10} - t^{11} - t^{12} + t^{14} + t^{15} + 2t^{16} + t^{17} - t^{26}.
\]
Taking a suitable map of $\PP(a_0,\dots,a_5)$ with $a_0+\cdots+a_5=26$ may describe a family of K3 surfaces, since at least the canonical class is right and $h^1(X,\cO_X)=0$ by Theorem~\ref{thm!GW}. In this case, maps from either $\PP(1,3,4,5,6,7)$ or $\PP(2,3,4,5,5,7)$ work, and these are two families in Alt{\i}nok's list~\cite{A} of $69$ codimension three K3 surfaces in $\Gr(2,5)$ format.

The weighted projective space $\PP(1,3,4,5,6,7)$ also admits a map to a different $\Gr(2,5)$ format with grading $\chi=(1,3,4,5,4,5,6,7,8,9)\in\Lambda$, with $\Pnum = 1 - t^8 - t^9 - t^{10} - t^{12} - t^{13} + t^{13} + t^{14} + t^{16} + t^{17} + t^{18} - t^{26}$, which realises another family of K3 surfaces from~\cite{A}.
\end{eg}

These examples are not complete intersections in a weighted Grassmannian $(\tV\GIT_\chi\C^*) \cap H_1\cap\cdots\cap H_4$, for quasilinear hypersurfaces $H_i$, since there are no variables of weights one or two in $\chi$. To interpret these regular pullbacks as intersection, one can take a cone on the weighted Grassmannian, introducing additional variables of weights one and two, as in~\cite{CR,QS}. More general complete intersections inside weighted homogeneous spaces are also common. The way we define `format', taking hypersurface slices of one format descibes a new format, a tensor-like combination of the existing format and a complete intersection; see~\S\ref{sec!other}.

\begin{eg}\label{eg!zerowt}
There is no reason why format variables should be weighted positively. The role of the key variety is as a target for regular pullbacks, and these are defined on the affine cone, so there is no risk of taking $\Proj$ of a ring with non-positive weights.

For example, consider the same key variety $\CGr(2,5)\subset\C^{10}$ as above, but with key weights
\[
\chi=\begin{pmatrix}
0&1&1&1\\
&1&1&1\\
&&2&2\\
&&&2
\end{pmatrix}.
\]
A regular pullback to a nonsingular curve in $\PP^4$ defines a curve of genus five in its canonical embedding. If $\phi^*(x_1)=0$ then the curve is trigonal and lies on the scroll given by the minors of the upper $2\times 3$ block of the matrix. Deforming $\phi^*(x_1)=\lambda$ away from zero moves the regular pullback off the trigonal locus to give a non-special canonical curve, a $(2,2,2)$ complete intersection in $\PP^4$. This example can be extended to $\PP^5$, where the special pullback is the trigonal K3 surface extending this canonical curve.

In this format, the pullback by $\phi$ of the $5\times 5$ matrix is the matrix of first syzygies among the equations, so this matrix must not have non-zero constant entries, otherwise, as in the example, the free resolution is not minimal and we fall into a different format. Such entries only happen when the key weight is zero, and in that case we only remain in the format if the corresponding pullback is the zero polynomial, giving a special element of the family.

As another example, the weights
\[
\chi=\begin{pmatrix}
-1&1&1&1\\
&1&1&1\\
&&3&3\\
&&&3
\end{pmatrix}
\]
admit a regular pullback to a canonical surface in $\PP^5$, with $K^2=11$, $p_g=6$, where necessarily $\phi^*(x_1)=0$; as a sanity check, with these invariants Riemann--Roch gives
\[
P_X(t) = \frac{1 - 3t^2 + 2t^3 - 2t^4 + 3t^5 - t^7}{(1-t)^6}.
\]
For a general regular pullback this is just a degree $(3,4)$ complete intersection in $\PP^1\times\PP^2$ in the mild disguise of its Segre embedding, so is well-known, but there are other cases that cannot be expressed in such straightforward terms.

It is easy to see that one cannot allow two key weights $\le0$ that are pulled back to the zero polynomial. Below we note that even a single one cannot work for the kind of threefolds we seek. For example, attempting to make a quasismooth Calabi--Yau threefold with key weights
\[
\chi=\begin{pmatrix}
0&2&2&2\\
&2&2&2\\
&&4&4\\
&&&4
\end{pmatrix}
\]
and a regular pullback to $\PP(1,1,1,2,2,2,3)$, we find no problem when $\phi^*(x_1)\not=0$ except that $X$ is then a complete intersection rather than in this Grassmannian format, but when $\phi^*(x_1)=0$ the regular pullback is not quasismooth at the index three point.
\end{eg}

We seek threefolds, and in this format at least negative key weights do not arise:

\begin{prop}\label{prop!poswt}
Let $X$ be a variety in $\CGr(2,5)$ format with ambient weights $\chi$. If $X$ is of dimension $\ge 3$ and quasismooth then $\chi$ consists of strictly positive integers.
\end{prop}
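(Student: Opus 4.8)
The plan is to argue by contradiction, reducing the positivity of \emph{all} key weights to a single local statement about the pullback map. The two facts I will combine are: a non-positive key weight forces one entry of the skew matrix $M$ to pull back to the zero polynomial, and such a vanishing is incompatible with quasismoothness once $\dim X\ge 3$. For the first, note that since $\phi$ has degree zero each $\phi^*(x_k)$ is weighted-homogeneous of degree $\chi_k$ in the positively graded ring $R$. If $\chi_k<0$ there are no elements of that degree, so $\phi^*(x_k)=0$; if $\chi_k=0$ then $\phi^*(x_k)$ is a constant, and as explained in Example~\ref{eg!zerowt} a nonzero constant entry of the syzygy matrix would make the resolution non-minimal, taking us out of $\CGr(2,5)$ format, so again $\phi^*(x_k)=0$. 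Thus it suffices to show: if some $\phi^*(x_k)=0$, then the punctured affine cone $\phi^{-1}(\tV)\setminus O$ is singular, i.e.\ $X$ is not quasismooth.

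By the $S_5$-symmetry of $\CGr(2,5)$ (permuting the five vectors of $\C^5$ acts transitively on the Pl\"ucker entries $x_k$) I may relabel so that the vanishing entry is the $(1,2)$ entry, $\phi^*(x_1)=0$. Then $\phi$ factors through the hyperplane $\{x_1=0\}$, so $\phi^{-1}(\tV)$ is a regular pullback of the degenerate section $W=\CGr(2,5)\cap\{x_1=0\}$. The point is that setting $x_1=0$ degenerates the Pfaffians: three of them become the $2\times 2$ minors of the $2\times 3$ block with rows $(x_2,x_3,x_4)$ and $(x_5,x_6,x_7)$, while the remaining two involve only $x_2,\dots,x_{10}$. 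Consequently the coordinate subspace $Z=\{x_1=\dots=x_7=0\}$ (the cone over the sub-Grassmannian $\Gr(2,3)$, with $x_8,x_9,x_{10}$ free) lies inside $W$.

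Next I would locate singularities along $Z$ and transport them through the pullback. A direct Jacobian computation shows that along $Z$ the gradients of the three ``minor'' Pfaffians vanish identically, so the Jacobian of the five equations has rank at most $2<3=\codim W$; hence $W$ is singular along $Z\setminus O$. Pulling back, $\phi^{-1}(Z)$ is cut out in $\Spec R$ by the six equations $\phi^*(x_2)=\dots=\phi^*(x_7)=0$, so has dimension at least $(d+4)-6=d-2$. By the chain rule the three pulled-back minor-Pfaffians still have vanishing gradient at every point of $\phi^{-1}(Z)$, so the pullback Jacobian there has rank at most $2$; since $\phi^{-1}(\tV)$ is a Gorenstein variety of pure codimension three, this means every point of $\phi^{-1}(Z)$ is a singular point of $\phi^{-1}(\tV)$. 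When $d\ge 3$ this locus has dimension $\ge 1$, hence contains a point other than $O$, contradicting quasismoothness.

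The crux is exactly the dimension count $\dim\phi^{-1}(Z)\ge d-2$: this is where the hypothesis $\dim X\ge 3$ enters, and it explains why the low-dimensional examples of Example~\ref{eg!zerowt} are permitted, since for curves and surfaces $\phi^{-1}(Z)$ can meet the cone only at $O$. The steps that need genuine care, rather than routine checking, are verifying that the rank drop survives the pullback---which holds because $Z$ lies in the locus where three of the equation-gradients vanish, so no fortuitous cancellation can restore rank three---and confirming that $W$ really has codimension three, so that rank $2$ signals a singularity and is not merely an artefact of writing a codimension-three variety with five equations.
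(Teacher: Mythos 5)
Your proof is correct, but it is not the paper's argument: the two proofs hinge on different degeneracy loci, and the difference is substantive. The paper also reduces to $\phi^*(x_1)=0$, but then takes the locus $\left(\phi^*(x_8)=\phi^*(x_9)=\phi^*(x_{10})=0\right)\subset X$, cut by three equations on $X$, and asserts that every point of it is non-quasismooth. At such a point one only knows that the $2\times 3$ block $\left(\phi^*(x_2),\dots,\phi^*(x_7)\right)$ has rank $\le 1$ (the three minor-Pfaffians vanish there); the five pulled-back gradients then consist of two proportional rows plus three rows spanning at most two dimensions, so their rank is bounded by $3$ --- which does not contradict quasismoothness. Indeed, at points where the block has rank exactly one the affine cone can be smooth: with $w=(0,0,2,2,2)$ and ambient $\PP(1,1,1,2,2,2,3)$ one can choose graded pullbacks so that a point of the paper's locus is a smooth point of the cone, even though $X$ is singular elsewhere (at the index three point, exactly as Example~\ref{eg!zerowt} records). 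Your locus $\phi^{-1}(Z)$ with $Z=\{x_1=\cdots=x_7=0\}$ is instead the preimage of the singular locus of the Schubert section $W=\CGr(2,5)\cap\{x_1=0\}$, along which the gradients of the three minor-Pfaffians vanish \emph{identically}; by the chain rule the pulled-back Jacobian then has rank $\le 2<3=\codim\phi^{-1}(\tV)$ at every point, with no transversality hypothesis on $\phi$ needed, and purity of the regular pullback converts the rank drop into genuine non-quasismoothness. The two dimension counts coincide (your six equations in $\Spec R$ and the paper's three equations on $X$ both give dimension $\ge d-2\ge 1$ for $d\ge 3$), so the threshold $\dim X\ge 3$ enters identically. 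In short, your route buys an unconditional rank drop where the paper's locus gives only a conditional one; you have in effect supplied the justification that the paper's one-line proof omits, and which, read literally, its chosen locus cannot provide.
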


\begin{proof}
%
%
%
If not, then without loss of generality $\phi^*(x_1)=0$ and any point of $X$ in the locus
\[
\left( \phi^*(x_8)=\phi^*(x_9)=\phi^*(x_{10})=0 \right) \subset X
\]
is a non-quasismooth point of embedding dimension at least four. This locus is necessarily non-empty if $\dim X\ge 3$.
\end{proof}

Note that the same conclusion holds when $X$ is a canonical threefold with arbitrary terminal singularities, since such terminal singularities have embedding dimension one by Reid~\cite{reidminimal} and Mori's~\cite{mori} classification.

\subsection{The Hilbert series of a canonical threefold}
Let $P=\frac{1}r(r-1,a,r-a)$ be a terminal quotient singularity with $r>1$ and $1\leq a<r$ coprime integers. (The first weight is $r-1$ since we consider varieties polarised by their canonical class.) Following~\cite{BRZ}, we define
\[
A=\frac{1-t^r}{1-t}=1+t+t^2+\cdots+t^{r-1}
\quad\text{ and }\quad
B=\prod_{b\in L}\frac{1-t^b}{1-t},
\]
and let $C=C(t)$ be the Gorenstein symmetric polynomial with integral coefficients such that $BC\equiv 1\ \modb{A}$ whose exponents lie in the integer range $\left\{\lfloor c/2\rfloor+1,\ldots,\lfloor c/2\rfloor+r-1\right\}$ (we abbreviate this to `$C$ is supported on $\left[ \alpha, \beta \right]$' for appropriate integers $\alpha, \beta$). In our case $X$ is a threefold with terminal singularities polarised by $K_X$, hence $c=5$.

\begin{thm}[\protect{\cite[Theorem~1.3]{BRZ}}]\label{thm!icecream}
Let $X$ be a canonical threefold with singularity basket $\cB$. For a terminal quotient singularity $Q=\frac{1}r(r-1,a,r-a)$, define
\[
\Porb(Q) = \frac{B(t)}{(1-t)^3(1-t^r)}
\]
where $B=B(t)$ is a polynomial supported on $\left[3, r + 1 \right]$ which satisfies
\[
B\times\prod_{b \in [r-1, a, r-a]}\frac{1-t^{b}}{1-t} \equiv 1 \mod \frac{1-t^r}{1-t}.
\]
Then the Hilbert series of $X$ polarised by $K_X$ is
\[
P_X = \Pini + \sum_{Q\in\cB} \Porb(Q),\quad\text{ where }\Pini = \frac{1+at+bt^2+bt^3+at^4+t^5}{(1-t)^4}
\]
for integers $a:=P_1-4$ and $b:=P_2 - 4P_1 + 6$.
\end{thm}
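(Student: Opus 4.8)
The plan is to compute $P_X(t) = \sum_{m\ge 0} h^0(X, mK_X)\,t^m$ by comparing it with the generating series of Euler characteristics. Since $X$ has canonical (hence log terminal) singularities and $K_X$ is ample, Kawamata--Viehweg vanishing gives $H^i(X, mK_X) = 0$ for all $i>0$ whenever $(m-1)K_X$ is ample, that is for $m\ge 2$; hence $h^0(X, mK_X) = \chi(X, mK_X)$ for $m\ge 2$. Consequently $P_X$, as a rational function, agrees with $\sum_m \chi(X, mK_X)\,t^m$ up to a correction concentrated in low degrees, and this correction is pinned down by the input data $P_0 = 1$, $P_1$, $P_2$ together with the Gorenstein symmetry coming from Serre duality $h^0(mK_X) = h^3((1-m)K_X)$ (recall $\omega_X = \cO_X(K_X)$). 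So it suffices to understand the Riemann--Roch generating series, its poles, and its symmetry.

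Next I would apply a singular Riemann--Roch formula (Reid's formula, equivalently Kawasaki orbifold Riemann--Roch) to $\chi(X, mK_X)$. This writes it as a \emph{main term}, a cubic polynomial in $m$ arising from integration over the smooth locus, plus a local contribution at each terminal cyclic quotient point $Q = \frac{1}{r}(r-1, a, r-a)$. The local contribution is a function of $m$ modulo $r$, expressible as a Dedekind-sum-type average over $\mu_r$ of the trace of the $\mu_r$-action on the relevant equivariant data. Summing against $t^m$, the cubic main term yields a rational function with denominator $(1-t)^4$ and palindromic numerator, while each periodic local term yields a rational function whose only new poles are at the primitive $r$-th roots of unity, i.e. carrying a factor $(1-t^r)$ in the denominator.

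The structural heart of the argument, and the main obstacle, is to show that each local term equals precisely $\Porb(Q) = B(t)/((1-t)^3(1-t^r))$ with $B$ the \emph{unique} Gorenstein-symmetric polynomial supported on $[3, r+1]$ satisfying the stated congruence. Here one uses that $Q$ is terminal, so $r-1$, $a$, $r-a$ are each coprime to $r$; then each factor $(1-t^b)/(1-t)$ is a unit modulo $(1-t^r)/(1-t)$ (evaluating at a nontrivial $r$-th root of unity it is nonzero), so the inverse $B$ exists and is determined once the representative window $[3, r+1]$ of length $r-1$ is fixed. One must then check that this $B$ (a) reproduces the periodic Riemann--Roch contribution, equivalently that $\Porb(Q)$ has exactly the correct polar part at each primitive $r$-th root of unity; and (b) carries the Gorenstein symmetry matching $h^0(mK_X) = h^3((1-m)K_X)$, which is exactly what the normalisation interval $[3, r+1]$ enforces. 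Identifying this canonical symmetric representative and matching it to the local orbifold contribution is where all the genuine content lies; the remaining pieces are vanishing statements or formal generating-function manipulations.

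Finally, I would assemble the two pieces. Because $B$ is supported on $[3, r+1]$, every $\Porb(Q)$ vanishes to order $3$ at $t=0$, so the coefficients of $t^0, t^1, t^2$ in $P_X$ come entirely from the main term $\Pini$. Demanding that $\Pini$ has its only pole at $t=1$, of order $4$, together with the Gorenstein symmetry $t\,\Pini(1/t) = \Pini(t)$, forces its numerator to be a palindromic quintic $1 + at + bt^2 + bt^3 + at^4 + t^5$. Matching coefficients against $P_0 = 1$, $P_1$, $P_2$ via $1/(1-t)^4 = \sum_m \binom{m+3}{3} t^m = 1 + 4t + 10t^2 + \cdots$ gives $a = P_1 - 4$ and $b = P_2 - 4P_1 + 6$, yielding $P_X = \Pini + \sum_{Q\in\cB} \Porb(Q)$ as claimed.
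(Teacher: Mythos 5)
First, a point of order: this paper does not prove Theorem~\ref{thm!icecream} at all --- it is imported verbatim from \cite[Theorem~1.3]{BRZ} --- so there is no internal proof to compare yours against; your attempt has to be judged on its own terms. On those terms, your outline does reproduce the architecture of the Buckley--Reid--Zhou argument: Kawamata--Viehweg vanishing and Serre duality identify $P_X$ with a Riemann--Roch generating series up to corrections in low degree; the series is split into a piece with pole only at $t=1$ and pieces carrying the poles at non-trivial roots of unity; the support window $[3,r+1]$ forces $\Porb(Q)=O(t^3)$, so that $1,P_1,P_2$ together with Gorenstein symmetry determine $\Pini$; and your coefficient match $a=P_1-4$, $b=P_2-4P_1+6$ is correct (it agrees with the expansion recorded after the theorem in the paper).

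However, the step you explicitly defer --- ``one must then check that this $B$ reproduces the periodic Riemann--Roch contribution'' --- is not a checkable detail; it \emph{is} the theorem. Everything else in your write-up (vanishing, duality, pinning down a palindromic quintic from three coefficients) is formal, and once one grants that $P_X-\sum_{Q\in\cB}\Porb(Q)$ has a pole only at $t=1$, the conclusion follows exactly as you say. To prove that polar-part statement you must start from an actual Riemann--Roch formula for canonical threefolds and transform its local terms into the ice-cream shape. Note that in the formula available here (the plurigenus formula, Theorem~\ref{thm!pluri}) the local terms $c_m(P)=\sum_{i=1}^{m-1}\overline{ib}(r-\overline{ib})/2r$ are \emph{not} periodic in $m$ --- they are partial sums of periodic sequences --- so even your framing ``the local contribution is a function of $m$ modulo $r$'' needs a repackaging step (splitting off growth to be absorbed into the main term) before each local generating function acquires denominator $(1-t)^3(1-t^r)$; one must then verify that the resulting numerator satisfies the congruence $B\cdot\prod_b (1-t^b)/(1-t)\equiv 1 \bmod (1-t^r)/(1-t)$ and is supported exactly on $[3,r+1]$. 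That Dedekind-sum manipulation is where the theorem lives (compare the explicit computation of $C$ in Theorem~\ref{thm!C_coeffs} of this paper, which is a shadow of it). Two sub-claims you treat as automatic also require proof: that $B$ has \emph{integer} coefficients (non-vanishing of $(1-t^b)/(1-t)$ at the roots of $A$ only gives invertibility over $\Q$; integrality needs a resultant argument or an explicit inverse as in the proof of Theorem~\ref{thm!C_coeffs}), and that the unique representative in the window $[3,r+1]$ is itself Gorenstein symmetric --- without the latter, the degree bound on the numerator of $P_X-\sum\Porb(Q)$, and hence its identification with $\Pini$, does not follow. As it stands your proposal is a correct plan with the central lemma asserted rather than proved.
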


The relationship between $a$, $b$ and plurigenera $P_1$, $P_2$ is determined by the expansion
\[
P = 1 + P_1t + P_2t^2 + \cdots = 1 + (a+4)t + (b+4a+10)t^2 + \cdots,
\]
since each series $\Porb(t)$ has no quadratic terms or lower.

\begin{eg}\label{eg:1/2(1,1,1)}
Suppose that $p=\frac{1}{2}(1,1,1)$. We have $A=1+t$ and $B=1$, so the inverse of $B$ is $1$ modulo $A$. The numerator of $\Porb(p)$ is supported in the range $[3,3]$. Observe that $-t^3\equiv 1\ \modb{A}$, so
$$\Porb(p)=\frac{-t^3}{(1-t)^3(1-t^2)}.$$
Expanded formally as a power series, $\Porb(p)=-t^3-3t^4-7t^5-10t^6-\cdots$.
\end{eg}

\begin{eg}\label{eg:1/8(3,5,7)}
Suppose now that $p=\frac{1}{8}(3,5,7)$. Observing that
\begin{align*}
B&=(1+t+\cdots+t^6)(1+t+t^2)(1+t+t^2+t^3+t^4)\\
&\equiv -t^7(-t^3-t^4-t^5-t^6-t^7)(1+t+t^2+t^3+t^4)\\
&\equiv t^2(1+t+t^2+t^3+t^4)^2,
\end{align*}
where the equivalence is taken modulo $A=1+t+\cdots+t^7$, it is clear that
\begin{align*}
t^3(1+t^5+t^{10})(t^5+t^{10}+t^{15})B&\equiv t^5(1+t+\cdots+t^{14})(t^5+t^6+\cdots+t^{19})\\
&\equiv t^5\cdot t^{15}\cdot t^5\cdot t^{15}\\
&\equiv 1.
\end{align*}
So we have an inverse for $B$. To shift this inverse into the desired range of exponents (and hence find $C$) we use the fact that $t^8\equiv 1\ \modb{A}$:
\begin{align*}
t^3(1+t^5+t^2)(t^5+t^2+t^7)&\equiv t^3(t^5 + t^2 + t^7 + t^2 + t^7+t^4+t^7+t^4+t)\\
&\equiv t^3(-3-2t-t^2-3t^3-t^4-2t^5-3t^6).
\end{align*}
Thus
$$\Porb(p)=\frac{-3t^3-2t^4-t^5-3t^6-t^7-2t^8-3t^9}{(1-t)^3(1-t^8)}.$$
Until the final step all the polynomials appearing had non-negative coefficients. Since the last subtraction was required only to eliminate the out-of-range $t^7$ monomial, and since this monomial had the largest coefficient, we see that every coefficient of the numerator of $\Porb(p)$ is strictly negative. This is the case in general for canonically polarised terminal quotient singularities.
\end{eg}

\begin{thm}\label{thm!C_coeffs}
Let $X$ be a canonically-polarised threefold, and $p\in X$ be a terminal quotient singularity $\frac{1}{r}(-1,a,-a)$ for coprime integers $r>1$ and $1\leq a<r$. Define $m\in\Z$ by the conditions $0<m\leq r/2$ and $am\equiv -1\ \modb{r}$. Then
$$C(t)=c_3t^3+\cdots+c_{r+1}t^{r+1},$$
where
$$c_{i+3}=\begin{cases}
i_a-m&\text{if }0<i_a\leq m, \\
m-i_a&\text{if }m<i_a\leq 2m-1, \\
-m&\text{otherwise}.
\end{cases}$$
Here $0<i_a\leq r$ satisfies $i_a\equiv -im\ \modb{r}$. More concisely,
$$c_{i+3}=-\min\bigl\{m, \abs{m-i_a}\bigr\}.$$
\end{thm}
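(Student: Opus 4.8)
The plan is to compute $C$ as the inverse $B^{-1}$ inside the ring $\Z[t]/(A)$, where I may freely use both $t^r\equiv 1$ and $A=1+t+\cdots+t^{r-1}\equiv 0$, and then transport the resulting class onto the prescribed support $[3,r+1]$. Throughout I write $B=\prod_{b\in\{r-1,a,r-a\}}\frac{1-t^b}{1-t}$ for the product whose inverse is $C$. At the outset I would record that the multiset $\{r-1,a,r-a\}$, and hence $B$ and $C$, is unchanged under $a\mapsto r-a$; since $am\equiv-1$ forces $(r-a)(r-m)\equiv-1\ \modb r$, this symmetry is what allows the normalisation $0<m\le r/2$ in the statement, so I assume it.

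First I would reduce $B$. Using $t^r\equiv 1$ one gets $\frac{1-t^{r-1}}{1-t}\equiv -t^{-1}$ and $\frac{1-t^{r-a}}{1-t}\equiv -t^{-a}S$, where $S=\frac{1-t^a}{1-t}=1+t+\cdots+t^{a-1}$, so that $B\equiv t^{-(a+1)}S^2\ \modb A$. Each factor is a unit, since for a nontrivial $r$th root of unity $\zeta$ one has $\zeta^a\ne 1$ (as $\gcd(a,r)=1$), hence $S(\zeta)\ne 0$, so $B$ is invertible. The crucial inversion is $S^{-1}\equiv -tT$ with $T=\sum_{j=0}^{m-1}t^{ja}$: indeed $S\cdot T=\frac{1-t^{ma}}{1-t}$, and $am\equiv-1\ \modb r$ gives $t^{ma}\equiv t^{-1}$, whence $S\cdot(-tT)\equiv 1$. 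Combining, $C\equiv t^{a+1}S^{-2}\equiv t^{a+3}T^2\ \modb A$. Expanding the square and incorporating the shift, $t^{a+3}T^2=\sum_{s=1}^{2m-1}w_s\,t^{as+3}$, where $w_s=\min\{s,2m-s\}$ counts the pairs $(j,k)\in\{0,\dots,m-1\}^2$ with $j+k=s-1$, so that $C\equiv\sum_{s=1}^{2m-1}w_s\,t^{as+3}\ \modb A$.

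The remaining and most delicate step is to rewrite this class on the support $[3,r+1]$. I would first note that $[3,r+1]$ meets every residue class modulo $r$ exactly once except the class of $2$, and correspondingly that $t^2\equiv-\sum_{j=3}^{r+1}t^j\ \modb A$ (the two sides differ by $A$ after applying $t^r\equiv 1$). Tracking residues, $t^{as+3}$ lies in the class of the window position $i$ with $i\equiv as$, and a short computation from $am\equiv-1$ shows the associated quantity satisfies $i_a\equiv s\ \modb r$, hence $i_a=s$ because $1\le s\le 2m-1\le r-1$. The single exceptional index is $s=m$: there $as+3\equiv 2\ \modb r$ falls in the missing class and $w_m=m$, so that term contributes $-m\sum_{j=3}^{r+1}t^j$, a uniform $-m$ to every coefficient. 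Every other $s$ contributes $w_s=\min\{i_a,2m-i_a\}$ to the single position with $i_a=s$; since $s\mapsto i_a=s$ is injective there are no collisions, and because the omitted position $i=r-1$ is precisely the one with $i_a\equiv m$, the value $i_a=m$ never occurs in the window. Adding the uniform $-m$ gives $c_{i+3}=-m+\min\{i_a,2m-i_a\}$ for $1\le i_a\le 2m-1$ and $c_{i+3}=-m$ otherwise, which is exactly the three-case formula, and its concise form $-\min\{m,\abs{m-i_a}\}$, once one separates $i_a<m$ from $m<i_a\le 2m-1$.

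I expect the main obstacle to be exactly this last reduction: because $[3,r+1]$ is \emph{not} a complete residue system modulo $r$, one cannot simply reduce exponents, and the entire formula hinges on correctly handling the single ``wrapping'' term $s=m$ through $t^2\equiv-\sum_{j=3}^{r+1}t^j$ and on verifying that $i_a=m$ is genuinely absent from the window. I would close by invoking uniqueness: the reduction map from polynomials supported on $[3,r+1]$ into $\Z[t]/(A)$ is an isomorphism of free $\Z$-modules (its matrix against the basis $1,t,\dots,t^{r-2}$ is unimodular), so the representative just constructed is the unique one supported there and therefore equals $C$. Its Gorenstein symmetry $c_{i+3}=c_{r+1-i}$ then serves as a consistency check rather than an additional hypothesis.
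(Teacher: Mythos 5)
Your proposal is correct and follows essentially the same route as the paper's proof: reduce $B\equiv t^{-(a+1)}S^2 \pmod{A}$, invert $S$ via $S\cdot(-tT)\equiv 1$ using $am\equiv-1\ \modb{r}$, expand $t^{a+3}T^2$ with coefficients $\min\{s,2m-s\}$, and then move the result onto the support $[3,r+1]$. Your handling of the wrapping term via $t^2\equiv-\sum_{j=3}^{r+1}t^j$ is exactly the paper's ``subtract $mA$'' step in different clothing (the term $s=m$ is the one landing at the excluded position with coefficient $m$), and your explicit unimodularity/uniqueness argument and the $a\leftrightarrow r-a$ normalisation merely make precise what the paper leaves implicit.
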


Notice that it might be necessary to switch the roles of $a$ and $-a$ in order for such an $m$ to exist---this is implicit in the statement of the theorem. For example, when considering Example~\ref{eg:1/8(3,5,7)} we are forced to take $a=5$.

Theorem~\ref{thm!C_coeffs} computes $\Porb$ for singularities of the form $Q=\frac{1}r(-1,a,-a)$. Multiplying by the natural denominator, the leading terms are
\[
(1-t)^3(1-t^r)\Porb(Q) = -mt^3 - \min\{m,r-2m\}t^4 -\cdots,
\]
where $m=-1/a\ \modb{r}$, as in the theorem.

\begin{cor}\label{cor:bound_on_num_sing}
Let $\Porb(p)=a_0+a_1t+a_2t^2+\cdots\in\Z[[t]]$ for some terminal quotient singularity $p\in X$. Then $a_0=a_1=a_2=0$ and $a_i<0$ for all $i\geq 3$. In particular there exists a bound on the number of singularities of $X$ in terms of $p_g$ and $P_2$.
\end{cor}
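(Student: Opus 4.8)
The plan is to read everything off the explicit numerator supplied by Theorem~\ref{thm!C_coeffs} and combine it with an elementary positivity argument for the convolution with the denominator. Write $\Porb(p) = C(t)/\bigl((1-t)^3(1-t^r)\bigr)$, where by Theorem~\ref{thm!C_coeffs} the numerator is $C(t) = \sum_{i=0}^{r-2} c_{i+3}t^{i+3}$ with $c_{i+3} = -\min\{m,\abs{m-i_a}\} \le 0$ for every index. I would then expand the reciprocal of the denominator as $1/\bigl((1-t)^3(1-t^r)\bigr) = \sum_{n\ge0} d_n t^n$; since $1/(1-t)^3$ and $1/(1-t^r)$ both have non-negative coefficients and the former has strictly positive ones, every $d_n \ge 1$.

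The first assertion is then immediate: $C(t)$ is supported in degrees $\ge 3$, so the product $\Porb(p)$ has no terms below $t^3$, giving $a_0=a_1=a_2=0$ (this is the observation already used after Theorem~\ref{thm!icecream}). For the negativity I would write $a_i = \sum_{j\ge 3} c_j\,d_{i-j}$ for $i\ge 3$. Every summand is $\le 0$ because $c_j \le 0$ and $d_{i-j}\ge0$; moreover the leading coefficient $c_3 = -m \le -1$ (the case $i=0$ of Theorem~\ref{thm!C_coeffs} forces $i_a=r$, hence the ``otherwise'' branch), so the $j=3$ term equals $-m\,d_{i-3} \le -1$. Therefore $a_i \le -1 < 0$ for all $i\ge 3$.

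For the bound on $\abs{\cB}$, I would extract a single low-degree coefficient from the Riemann--Roch formula of Theorem~\ref{thm!icecream}, $P_X = \Pini + \sum_{Q\in\cB}\Porb(Q)$. Comparing coefficients of $t^3$ gives $P_3 = [t^3]\Pini + \sum_{Q\in\cB} a_3(Q)$, where $[t^3]\Pini$ is an explicit affine function of $a,b$ and hence of $p_g=P_1$ and $P_2$; a short expansion of $\Pini$ gives $[t^3]\Pini = 10 - 10p_g + 5P_2$. By the previous step each $a_3(Q)$ is a negative integer, so $\sum_{Q\in\cB} a_3(Q) \le -\abs{\cB}$; since $P_3 \ge 0$ as a plurigenus, this rearranges to $\abs{\cB} \le [t^3]\Pini - P_3 \le 10 - 10p_g + 5P_2$, a bound depending only on $p_g$ and $P_2$.

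The substance of the argument lives entirely in Theorem~\ref{thm!C_coeffs}: once the numerator coefficients are known to be non-positive with a strictly negative leading term, the corollary is a formal consequence of convolution with a positive series and a single coefficient comparison. The only points needing care are the integrality used in the counting step---that each $a_3(Q)$ is not merely negative but at most $-1$, which holds because $\Porb(Q)\in\Z[[t]]$---and the harmless replacement of $P_3$ by its lower bound $0$, which is what lets the final estimate avoid referring to any invariant beyond $p_g$ and $P_2$.
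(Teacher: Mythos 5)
Your proposal is correct and takes essentially the same route the paper intends: the corollary is stated there as an immediate consequence of Theorem~\ref{thm!C_coeffs} together with the preceding remark that $(1-t)^3(1-t^r)\Porb(Q)=-mt^3-\min\{m,r-2m\}t^4-\cdots$, and your argument---numerator supported in degrees $\ge 3$ with non-positive coefficients and $c_3=-m\le -1$, convolution with the coefficient-wise positive series $1/\bigl((1-t)^3(1-t^r)\bigr)$, then comparison of the $t^3$ coefficient in $P_X=\Pini+\sum_{Q\in\cB}\Porb(Q)$ using $P_3\ge 0$---is exactly that deduction spelled out. Your explicit computation $[t^3]\Pini=10-10p_g+5P_2$, and hence the bound $\abs{\cB}\le 10-10p_g+5P_2$, is also correct.
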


\begin{proof}[Proof of Theorem~\ref{thm!C_coeffs}]
With notation as above, observe that:
\begin{align*}
B&=(1+t+\cdots+t^{r-2})(1+\cdots+t^{a-1})(1+\cdots+t^{r-a-1})\\
&\equiv t^{r-1}(1+\cdots+t^{a-1})(t^{r-a}+\cdots+t^{r-1})\ \modb{A}\\
&=t^{2r-a-1}(1+t+\cdots+t^{a-1})^2.
\end{align*}
With $m$ as defined in the theorem,
$$t(1+t^a+t^{2a}+\cdots+t^{(m-1)a})(1+t+t^2+\cdots+t^{a-1})=t + t^2 + \cdots + t^{ma},$$
which is congruent to $-1$ modulo $A$. Hence:
\begin{align*}
C&\equiv t^{a+1}\cdot t^2(1+t^a+\cdots+t^{(m-1)a})^2\ \modb{A}\\
&=t^3(1+t^a+t^{2a}+\cdots+t^{(m-1)a})(t^a+t^{2a}+\cdots+t^{ma}).
\end{align*}
We consider the product of factors:
$$C_1=(1+t^a+t^{2a}+\cdots+t^{(m-1)a})(t^a+t^{2a}+\cdots+t^{ma}).$$

Recall that the numerator $C$ of $\Porb(p)$ is supported in $[3,r+1]$; we compute this by finding the integral polynomial equivalent to $C_1$ modulo $A$ supported in $[0,r-2]$.

The terms of $C_1$ arise as a product $t^{ja}$ with $0\leq j\leq m-1$ from the first factor and $t^{ka}$ with $1\leq k\leq m$ from the second. Hence the coefficient of $t^{ia}$ in the resulting expansion is given by:
$$\begin{cases}
i,&\text{if }0<i\leq m;\\
2m-i,&\text{if }m<i\leq 2m-1.
\end{cases}$$
Since $a$ is coprime to $r$, the resulting monomials are equivalent modulo $1-t^r$ (and hence also modulo $A$) to distinct powers of $t$ in the range $t,\dots,t^{r-1}$ (recall that by definition $2m-1\leq r-1$). We obtain the equivalent polynomial:
$$C_1\equiv c'_1t+\cdots+c'_{r-1}t^{r-1}\ \modb{A},$$
where:
$$c'_i=\begin{cases}
i_a,&\text{if }0<i_a\leq m;\\
2m-i_a,&\text{if }m<i_a\leq 2m-1;\\
0,&\text{otherwise.}
\end{cases}$$
Subtracting $mA$ from this (to shift the degree down by one) gives the desired result.
\end{proof}

\section{Enumeration of Hilbert series and varieties}
We aim to construct $d$-dimensional varieties $X\subset \PP(W)$, for weights $W$, in a given format and with canonical class $\omega_X = \cO_X(k)$ for given $k$. Moreover we insist that the singularities appearing on $X$ are those of some chosen family. This could be a meaningful complete family---terminal threefold singularities, say---or an arbitrary collection amenable to computation---isolated four-fold terminal quotient singularities, for example. In either case, we have to be able to compute their $\Porb$.

\subsection{The general process to find orbifolds}
Fix a key variety $\tV\subset\C^n$ of codimension~$c$, and fix integers $d, k\in\Z$ with $d\ge 2$ and a class of singularities $Q$ for which $\Porb(Q)$ is computable. We aim to construct $d$-dimensional varieties $X$ in weighted projective space that have $K_X=\cO_X(k)$, singularities in the chosen class, and key variety~$\tV$. This pseudo-algorithm is similar in spirit to that of~\cite{CR} and~\cite{QS}, but differs in that here we determine the target Hilbert series first and then try to match a basket, rather than choosing a basket and computing the Hilbert series.

\begin{enumerate}[leftmargin=25pt]
\item\label{alg!grading}
Choose a grading $\chi$ on $\tV$. This determines a format $F = (\tV,\chi,\F)$.

\item\label{alg!weights}
List all possible ambient weights $W$ for which there is a map $\phi\colon\C^{d+c+1}\rightarrow \C^{n}$ that is equivariant of degree zero with respect to the diagonal $\C^*$ action with weights $W$ in the domain and $\chi$ in the codomain; that is, $\phi$ is defined by a vector of $n$ polynomials homogeneous with respect to $W$ of weights exactly $\chi$ (and not a multiple of $\chi$).

\item\label{alg!hilbert}
Setting $\widetilde{X} = \phi^{-1}(\tV)$, write out the Hilbert series $P_X(t)$ of $X=\widetilde{X}\GIT_{\! W}\,\C^*\subset\PP(W)$, and determine the initial term $\Pini(t)$.

\item\label{alg!basket}
Set $R(t) = P_X(t) - \Pini(t)$. Compute all ways of realising $R(t) = \sum_{Q\in\cB} \Porb(Q)$ for finite sets $\cB$ of singularities of the chosen family. If there are no solutions, then a variety cannnot be realised admitting only the given class of singularities.

\item\label{alg!oracle}
Accept or reject candidate Hilbert series according to whether or not there exists an orbifold in the given format that realises it.
\end{enumerate}
Apart from the final step~\eqref{alg!oracle}, this process can be automated on any computer algebra system---it uses only standard tools such as rational functions and power series. Steps~\eqref{alg!grading} and~\eqref{alg!oracle} rely on knowledge of the chosen format. The other steps are essentially independent of the format, and we discuss these first.

\subsubsection{Step~\eqref{alg!weights}\emph{:} Enumerating the ambient weights}
The maximum key weight $\chi_{\mathrm{max}}$ is part of the format. For orbifolds (or canonical threefolds with terminal singularities) no variable can be omitted from the equations, so the largest degree occurring in any ambient weight sequence $W$ cannot exceed $\chi_{\mathrm{max}}$. Together with the condition that $\sum_{a\in W} a = k - k_{\tV}$, this implies that there are only finitely many weight sequences $W$, and they can easily be computed with standard techniques. (One can immediately reject sequences that will lead to non-well-formed varieties, for example when $W$ has a nontrivial common divisor.)

\subsubsection{Step~\eqref{alg!hilbert}\emph{:} Recovering the Hilbert series $P_X$ and $\Pini$}
For each choice of $\chi$ and of $W$, we suppose that suitable regular pullback $\phi$ exists, and write $P_X(t)$ using the formula of Proposition~\ref{prop!PX}. As power series expansions, the $\Porb$ summands have terms that start in degree $\lfloor d+k+1 \rfloor + 1$, so that $\Pini$ agrees with $P_X$ in all degrees up to its centre of Gorenstein symmetry. So to compute the numerator of $\Pini$ we need only determine whether any equations have low degrees and compensate appropriately in the corresponding coefficients of $P_X$. For canonical threefolds, the coefficients of $t$ and $t^2$ are enough.

\subsubsection{Step~\eqref{alg!basket}\emph{:} Polytopes and knapsack kernels}\label{sec!knapsack}
Next we match the possible $\Porb$ contributions arising from the candidate singularities $\sigma_1,\ldots,\sigma_m$ to the Hilbert series, and so build the possible baskets. This is a ``knapsack''-style search: summing non-negative multiples of a known collection of vectors to obtain a given solution. The first few terms of each possible $\Porb$ contribution, together with the target sequence $P_X-\Pini$, are used to construct a polyhedron in the positive orthant whose integer points $(a_1,\ldots,a_m)\in\Z_{\geq 0}^m$ give solutions to $\sum a_i\Porb(\sigma_i)=P_X-\Pini$. It is an important point that the resulting polyhedron may be infinite: it decomposes into a sum of a compact polytope $Q$ and a (possibly empty) tail cone $C$. The points in $Q$ correspond to the possible baskets for $X$, whilst the Hilbert basis of $C$ describes the possible ``kernels''; that is, collections of singularities whose net $\Porb$ contribution is zero, so can be added to any basket.

\subsubsection{Remarks}
The process described above does not even in principle give rigorous classification results---the key varieties we use have infinitely many diagonal $\C^*$ actions. It is worth being clear about where the process is finite and determined, where it is infinite but under control, and where it contains essentially infinite searches.

\begin{enumerate}[leftmargin=25pt]
\item
The ambient weights $W$ are solutions to a ``knapsack''-type problem---find a fixed number of strictly positive integers with a given sum. Such problems of course have a finite solution, with well-documented algorithms, if one wants to implement them.

Our approach has a striking virtue: it is easier to solve for ambient weights $W$ if one imposes additional conditions on the weights than if one does not. For example, to find cases of canonical threefolds with empty bi-canonical linear system we can solve for $W$ among integers $\ge 3$. Such conditions dramatically simplify the problem; compare~\S\ref{sec!more}.


\item
As explained in~\S\ref{sec!knapsack}, the list of possible baskets that solve the purely numerical problem of completing $\Pini$ to the Hilbert series $P_X$ can be infinite. But even then, it is represented by the points of a finitely-determined polyhedron, and these points can be enumerated in a systematic order, from `small' baskets to `large' baskets. Any given candidate variety has known ambient weights and equation degrees, and so only finitely many of these baskets could possibly occur.

The kind of elementary calculation one faces is this: if the ambient stratum that has an index three stabiliser is $\Gamma=\PP(3,6)$, and if one of the equations has degree twelve, then, unless the format forces this equation to vanish along $\Gamma$, there cannot be more than two orbifold points of index three, since this equation restricted to $\Gamma$ is quadratic.

\item
Although many geometrically important searches will have a finite solution (compare~\cite[Theorem~4.1]{JK} for quasismooth hypersurfaces), the search routine outlined above does not have a stopping condition and we cannot know if or when all solutions have been found.
This is in the same spirit as Iano--Fletcher's original enumeration for Fano threefolds in codimension two (retrospectively complete by~\cite{CCC}), but differs from Reid's computation of the $95$ Fano hypersurfaces and Johnson--Koll\'ar's calculation of Fano complete intersections.
For many of our searches, we simply continue searching until no new results appear; see the columns $k_{\mathrm{last}}$ and $k_{\mathrm{max}}$ of Table~\ref{tab!summary}.

\item
The process as stated works in any generality for any key variety. We describe the $\Gr(2,5)$ format in detail in~\S\ref{sec!Gr_2_5}, and sketch some other formats in~\S\ref{sec!other}.

\item
We have not used the condition that $\phi$ exists except to bound the weights appearing in $W$, nor have we enforced the condition that $\phi^{-1}(\tV)$ is Cohen--Macaulay. Both of these are postponed to the final step.
\end{enumerate}

\subsection{Canonical threefolds in $\Gr(2,5)$ format}\label{sec!Gr_2_5}
We make formats with the codimension three key variety $\tV=\CGr(2,5)$ of Example~\ref{sec!pfaff} and its usual Pfaffian free resolution.

\subsubsection{Steps~\eqref{alg!grading}--\eqref{alg!basket}}
Iterating over the possible gradings $\chi$ is one pass through an infinite loop. By~\cite{CR}, $\chi$ is determined by a vector $(w_1,\dots,w_5)$ with either all $w_i\in\Z$ or all $w_i\in \frac{1}2+\Z$: for Pl\"ucker coordinates $x_{ij}$ with $1\le i<j\le5$, set $\deg x_{ij} = w_i+w_j$, and then $\chi = (\chi_{ij})$. To enumerate all possible $w$, we may assume $w_1\le\cdots\le w_5$. By Proposition~\ref{prop!poswt}, when $d\ge3$ all key variables have positive degrees, so $w_1+w_2>0$, and in particular $w_2>0$. The adjunction number of the key variety is $k_{\tV} = 2\sum w_i$. A naive search routine now computes all $w$ satisfying these conditions for a given $k_{\tV}$ (which is finite), and the full search is carried out in increasing adjunction number $k_{\tV}=1,2,\dots$; this is the only point where the search is not finite.

The weights of the five equations, $d_j=(\sum w_i)-w_{6-j}$, are determined by the format and satisfy $d_1\le\cdots\le d_5$. For Step~\eqref{alg!weights} we choose weights $a_0\le \cdots \le a_6$ of a potential ambient space $\PP(a_0,a_1,\dots,a_6)$. To find canonical varieties we choose $\sum a_i = k - 1$.

If $X\subset\PP(a_0,a_1,\dots,a_6)$ is a variety in this format, then its Hilbert series is $P_X(t) = \Pnum/\Pi$ where $\Pi := \prod (1 - t^{a_i})$ and
\[
\Pnum := 1 - t^{d_1} - \cdots - t^{d_5} + t^{k-d_5} + \cdots + t^{k-d_1} - t^k
\]
with $k=2\sum w_i$.

It is easy to see that for canonical threefolds there will be no equations of degree two, and so the first two coefficients of the power series expansion $P_X = 1 + P_1t + P_2t^2 + \cdots$ are $P_1 = c_1$ and $P_2 = c_2 + \frac{1}{2}c_1(c_1+1)$, where $c_s$ is the number of $a_i$ equal to $s$.

\subsubsection{Step~\eqref{alg!oracle}\emph{:} Complete intersections in cones}
In practice it is often convenient to treat candidate varieties as complete intersections inside projective cones, even though the regular pullbacks we use can be more general. If possible we apply Bertini's theorem. However, when there are many different weights bigger than one, the base loci appearing in successive ample systems tend to be large.

\begin{eg}
\emph{Number~4 in Table~\ref{tab!codim3}: $X\subset\PP(1^5,2^2)$.}
Let $V_1\subset\PP(1^5,2^{10})$ be the projective cone over $\tV$ with vertex $\PP^4$, which is also the locus of non-quasismooth points. Then $X\subset V_1$ is the complete intersection of eight quadrics. The system of quadrics has empty base locus, and between them they miss the vertex, so $X$ is quasismooth by Bertini's theorem.
\end{eg}

Numbers~$1$ and~$2$ in Table~\ref{tab!codim3} work in the same way: the complete intersection in the end has empty base locus because there are no coprime weights to be eliminated.

\begin{eg}
\emph{Number~6 in Table~\ref{tab!codim3}: $X\subset\PP(1^4,2^2,3)$.}
Let $V_1\subset\PP(1^4,2^3,3^4,4)$ be the projective cone over $\tV$ with vertex $\PP^1$. Consider $V_2\subset V_1$, a general complete intersection of three cubics. Between them, these cubics miss $V_1\cap\PP(3^4)$, since that is codimension one in $\PP(3^4)$, and they miss the vertex too. But each cubic does have base locus $V_1\cap\PP(2^3,4)$, which is codimension one in $\PP(2^3,4)$, and is in fact a surface together with residual point. So at this stage we know that $V_1\subset\PP(1^4,2^3,3,4)$ is quasismooth away from that locus. (Eliminating the variables does not cause confusion, since the locus of concern is exactly where they all vanish, and so it doesn't move away from $\PP(2^3,4)$ when we eliminate---that is obvious in this case, since that is the only stratum with any index two stabiliser, but we need to know this in other situations later too.)

Now let $V_3\subset V_2$ be the locus of a general quartic. The linear system of quartics has base locus $V_2\cap\PP(3^4)$, but that is empty. So $V_3\subset\PP(1^4,2^3,3)$ is quasismooth away from a curve $\Gamma\subset\PP(2^3)$. Finally $X\subset V_3$ is the locus of a general quadric. The system of quadrics has empty base locus on $V_3$, so the only question remains about the point(s) where the quadric vanishes on~$\Gamma$. But it is easy to write equations for a specific $X$ that meets $\PP(2^3)$ in a single point that is manifestly quasismooth, and so the general $X$ is quasismooth as claimed.
\end{eg}

Numbers~3, 5 and~7--11 in Table~\ref{tab!codim3} work in the same way: each new hypersurface cuts the existing base locus down, but there is new base locus to consider too.

\begin{eg}
\emph{Number~12 in Table~\ref{tab!codim3}: $X\subset\PP(1^2,2^2,3^2,4)$.}
Let $V_1\subset\PP(1^2,2^2,3^3,4^4,5)$ be the projective cone over $\tV$ with vertex $\PP^1$. The final variety $X$ will simply be a $3,4,4,4,5$ complete intersection in $V_1$, but Bertini's theorem is not so easy to apply since most low-degree linear systems have rather large base locus. Nevertheless, with care it can still be made to work.

First consider $V_2\subset V_1$, a general complete intersection of three quartics. Between them, these quartics miss $V_1\cap\PP(4^4)$, since that is codimension one there, and they miss the vertex too. But each quartic does have base locus $V_1\cap\PP(3^3,5)$, which is a copy of $\PP(3^2,5)$ and a residual index three point. (So far similar to the previous example.)

Now let $V_3\subset V_2$ be the locus of a general quintic. It meets the previous base locus in $V_2\cap\PP(3^3)$---a line and a disjoint point---and it also has base locus of its own, namely
\[
\left( V_2\cap\PP(2^2,4) \right) \cup \left( V_2\cap\PP(3^3,4) \right).
\]
We leave the first of these for now, but note that the second is a collection of finitely many points, none of which are at the index four point. At this stage we have $V_3\subset\PP(1^2,2^2,3^3,4)$, with the three groups of loci of concern.

Finally $X\subset V_3$ is the locus of a general cubic. It misses all isolated base points, other than those lying in $\PP(2^2,4)$, and cuts the index three line in a single point; calculation on an example shows this point to be $\frac{1}3(1,2,2)$ in general.

It remains to consider the locus $V_3\cap\PP(2^2,4)$, since this is in the base locus of the linear system of cubics. Calculation on an example shows that this is finitely many $\frac{1}2(1,1,1)$ points, and a standard weighted Hilbert--Burch calculation confirms that there are four such points (necessarily, from the original orbifold Riemann--Roch calculation, if you prefer).
\end{eg}

One could continue, but the calculations become rather fiddly, with many distinct base loci to keep track of. We settle, at this stage, for computing random examples defined over the rational numbers and using computer algebra to check that their Jacobian ideals define the empty set. For example, number~18 in Table~\ref{tab!codim3}, $X\subset\PP(3,4^2,5^2,6,7)$, can be realised by the Pfaffians of the skew $5\times 5$ matrix
\[
\begin{pmatrix}
y&t&v&w\\
&v&w&xt+y^2+z^2\\
&&xu+yz&x^3\\
&&&t^2+u^2
\end{pmatrix}.
\]

\subsubsection{Plurigenus invariants}
We recall the plurigenus formula:

\begin{thm}[\protect{\cite[Theorem~5.5]{C3f},~\cite[Theorem~2.5(4)]{F87}}]
\label{thm!pluri}
Let $X$ be a canonical threefold with singularity basket $\cB$ and $\chi=\chi(\cO_X)$. Then
\[
h^0(X,mK_X) = (1-2m)\chi + \frac{m(m-1)(2m-1)}{12}K^3 + \sum_{p\in\cB} c_m(P)
\]
where, for $P=\frac{1}r(-1,a,-a)$ and $ab\equiv 1\ \modb{r}$, we have
\[
c_m(P) = \sum_{i=1}^{m-1} \frac{\overline{ib}(r-\overline{ib})}{2r}.
\]
\end{thm}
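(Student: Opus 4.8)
The plan is to reduce the plurigenus $h^0(X,mK_X)$ to an Euler characteristic and then compute that Euler characteristic by singular Riemann--Roch, sorting the answer into a global Chern-number part and a sum of purely local contributions at the terminal quotient points. First I would invoke Kawamata--Viehweg vanishing: since $X$ has canonical, hence rational, singularities and $K_X$ is ample, for $m\ge 2$ we may write $mK_X = K_X + (m-1)K_X$ with $(m-1)K_X$ ample and conclude $H^i(X,mK_X)=0$ for $i>0$, so that $h^0(X,mK_X)=\chi(\cO_X(mK_X))$; the case $m=1$ is $p_g$ together with $h^i(\cO_X(K_X))=0$ from Theorem~\ref{thm!GW}(i) and Serre duality. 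Thus it suffices to evaluate $\chi(\cO_X(mK_X))$.

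Next I would apply Reid's singular Riemann--Roch for a threefold with isolated cyclic quotient singularities,
\[
\chi(\cO_X(D)) = \chi(\cO_X) + \tfrac{1}{12}D(D-K_X)(2D-K_X) + \tfrac{1}{12}D\cdot c_2(X) + \sum_{Q\in\cB} c_Q(D),
\]
and specialise to $D=mK_X$. The cubic term immediately produces $\tfrac{1}{12}m(m-1)(2m-1)K_X^3$. To turn the $\tfrac{1}{12}mK_X\cdot c_2$ term into the advertised $(1-2m)\chi$, I would use the case $D=0$ of the same formula (which forces $\sum_Q c_Q(0)=0$) together with the Noether-type relation expressing $\chi(\cO_X)$ in terms of $K_X\cdot c_2$ and the local data; substituting eliminates $c_2$ in favour of $\chi$ and absorbs the leftover local constants into the singularity sum. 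At this stage one has $h^0(mK_X) = (1-2m)\chi + \tfrac{1}{12}m(m-1)(2m-1)K_X^3 + \sum_Q\bigl(c_Q(mK_X)-c_Q^{\mathrm{poly}}\bigr)$, and it remains only to identify the local terms.

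The heart of the argument, and the step I expect to be the main obstacle, is the local computation at a point $P=\frac1r(-1,a,-a)$. Here I would compute $c_Q(mK_X)$ by equivariant (holomorphic Lefschetz) Riemann--Roch for the $\mu_r$-action on the local model $\C^3$, summing over the nontrivial characters $\zeta^j$ with $1\le j\le r-1$ the contribution of $mK_X$; writing $b$ for the inverse $ab\equiv1\ \modb{r}$, each character contributes a sawtooth value, and collecting them telescopes into $\sum_{i=1}^{m-1}\tfrac{\overline{ib}(r-\overline{ib})}{2r}$. Equivalently one shows that the single-step difference $c_m(P)-c_{m-1}(P)$ equals $\tfrac{\overline{(m-1)b}(r-\overline{(m-1)b})}{2r}$, so that the closed form follows by summation from $c_1(P)=0$. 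The delicate points are the correct normalisation of the Dedekind-sum/sawtooth terms and the bookkeeping that matches the residual polynomial pieces $c_Q^{\mathrm{poly}}$ against the global regrouping; these are precisely what make the final per-singularity term positive, in contrast with the strictly negative Hilbert-series contributions of Corollary~\ref{cor:bound_on_num_sing}.

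As an independent check I would cross-validate against the orbifold Riemann--Roch of Theorem~\ref{thm!icecream}, which gives $h^0(mK_X)=[t^m]\Pini + \sum_Q [t^m]\Porb(Q)$. Since $\Pini$ is built from the plurigenera $P_1,P_2$ rather than from $K_X^3$ and $\chi$, matching this with the plurigenus formula amounts to isolating, in each $\Porb(Q)=B(t)/((1-t)^3(1-t^r))$, the part of the partial-fraction decomposition supported at $t=1$ (a cubic quasi-polynomial, which recombines with $[t^m]\Pini$ to reconstruct the $K_X^3$ and $\chi$ terms) from the part supported at the nontrivial $r$-th roots of unity (whose coefficients are exactly the Dedekind sums $c_m(P)$). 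This reconciles the two groupings and, using the explicit description of the relevant numerators already obtained in Theorem~\ref{thm!C_coeffs}, gives a purely generating-function verification of the local terms.
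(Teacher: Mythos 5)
The first thing to note is that the paper does not prove Theorem~\ref{thm!pluri} at all: it is imported verbatim from Reid \cite[Theorem~5.5]{C3f} and Iano-Fletcher \cite[Theorem~2.5(4)]{F87}, and the surrounding text only records consequences (the ``Basic numerology'' corollary and the Barlow--Kawamata formula). So the comparison is necessarily with the proof in those references, and your outline reconstructs its skeleton correctly: Kawamata--Viehweg vanishing reduces $h^0(mK_X)$ to $\chi(mK_X)$ for $m\ge 2$; Reid's singular Riemann--Roch expresses $\chi(mK_X)$ as $\chi(\cO_X)$ plus the cubic term $\frac{1}{12}m(m-1)(2m-1)K^3$ plus $\frac{m}{12}K_X\cdot c_2$ plus local terms; the relation $K_X\cdot c_2=\sum_Q\frac{r^2-1}{r}-24\chi(\cO_X)$ (quoted in the paper immediately after the theorem) trades $c_2$ for $\chi(\cO_X)$, producing the coefficient $(1-2m)$ and pushing a summand $\frac{m(r^2-1)}{12r}$ into each local contribution; and the remaining content is the local identification $c_Q(mK_X)+\frac{m(r^2-1)}{12r}=\sum_{i=1}^{m-1}\overline{ib}\left(r-\overline{ib}\right)/(2r)$ via equivariant Riemann--Roch and Dedekind-sum bookkeeping.

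Two genuine gaps remain. First, the theorem concerns an arbitrary canonical threefold with singularity \emph{basket} $\cB$, and the basket is not the set of singular points of $X$. The literature proof first passes to a crepant terminal modification (Reid \cite{reidminimal}), which changes neither $\chi(\cO_X)$, $K^3$, nor any plurigenus, and then uses the fact that each terminal singularity deforms flatly to a collection of terminal cyclic quotient points---that collection \emph{is} the basket---whose Riemann--Roch contributions are the ones entering the formula. Your holomorphic-Lefschetz computation at actual isolated quotient points of $X$ proves the statement only in the special case where $X$ has precisely such singularities (the case used elsewhere in the paper), not as stated. Second, your parenthetical claim that the case $m=1$ also comes out right is false: at $m=1$ the right-hand side equals $\chi(K_X)=-\chi(\cO_X)$, which under the vanishings of Theorem~\ref{thm!GW} is $p_g-1$, not $p_g$; the formula with $h^0$ on the left is valid only for $m\ge 2$, exactly as the paper's corollary records via $P_1=\chi(X,K_X)+1$. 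A smaller point: your proposed ``independent check'' against Theorem~\ref{thm!icecream} is not independent, since the ice-cream formula of \cite{BRZ} is itself derived from (and equivalent to) this plurigenus formula; as a consistency check it is harmless, but it carries no additional evidential weight.
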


Iano--Fletcher~\cite{F87} gives four different expressions for the terms in the plurigenus formula. In fact, this formula holds exactly as stated for any projective threefold with canonical singularities. The plurigenus formula goes together with the Barlow--Kawamata formula~\cite{K86} for $K_X \cdot c_2(X)$:
\[
\pi^*K_X \cdot c_2(Y) = \sum_Q \frac{r^2-1}{r} - 24\chi(\cO_X),\quad\text{ for any resolution }\pi\colon Y\rightarrow X.
\]

\begin{cor}[Basic numerology]
Set $P_m = h^0(X,mK_X)$ for $m\in\Z$. It follows from Kawamata--Viehweg vanishing that
\[
P_m = \chi(X,mK_X),\quad\text{ for }m\ge 2,
\]
and from Theorem~\ref{thm!GW} that $h^1(X,K_X) = h^2(X,\cO_X) = 0$ and $h^2(X,K_X) = h^1(X,\cO_X) = 0$, so that
\[
P_1 = \chi(X,K_X) + 1,\quad\text{ or equivalently that }\chi(X,\cO_X) = 1 - P_1.
\]
\end{cor}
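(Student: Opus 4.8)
The plan is to obtain all three statements from the additivity of Euler characteristics together with two vanishing inputs: Kawamata--Viehweg vanishing for the pluricanonical sheaves, and Theorem~\ref{thm!GW} for $\cO_X$ itself, with Serre duality used to pass between $\cO_X(mK_X)$ and $\cO_X((1-m)K_X)$. As preparation I would record that $X$, having \qfact\ canonical singularities, has rational and hence Cohen--Macaulay singularities, with dualizing sheaf $\omega_X\cong\cO_X(K_X)$; this legitimises Serre duality in the form $h^i(X,mK_X)=h^{3-i}(X,(1-m)K_X)$ for all $m\in\Z$. I would also use that $X$ is irreducible, so $h^0(X,\cO_X)=1$, and that $K_X$ is ample.

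For the first display I would show $H^i(X,mK_X)=0$ for $i>0$ and $m\ge2$. Writing $mK_X=K_X+(m-1)K_X$ with $(m-1)K_X$ an ample $\Q$-Cartier divisor, Kawamata--Viehweg vanishing applied to the klt pair $(X,0)$ gives the vanishing, so $\chi(X,mK_X)=h^0(X,mK_X)=P_m$.

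For the vanishings in the second part I would apply Theorem~\ref{thm!GW}(i) to the graded ring $R=R(X,K_X)$: this requires $R$ to be Cohen--Macaulay, which holds because the varieties in question arise as projectivised regular pullbacks of Gorenstein formats and so have Gorenstein---hence Cohen--Macaulay---coordinate ring by Proposition~\ref{prop!reg}. The theorem then gives $h^1(X,\cO_X)=h^2(X,\cO_X)=0$, and Serre duality converts these into the stated equalities $h^2(X,K_X)=h^1(X,\cO_X)=0$ and $h^1(X,K_X)=h^2(X,\cO_X)=0$. For the final display I would expand
\[
\chi(X,K_X)=h^0(X,K_X)-h^1(X,K_X)+h^2(X,K_X)-h^3(X,K_X)=P_1-h^3(X,K_X)
\]
using these vanishings, note $h^3(X,K_X)=h^0(X,\cO_X)=1$ by Serre duality, and conclude $P_1=\chi(X,K_X)+1$; the reformulation $\chi(X,\cO_X)=1-P_1$ follows from $\chi(X,K_X)=(-1)^3\chi(X,\cO_X)$, or directly by the same expansion applied to $\cO_X$ together with $h^3(X,\cO_X)=h^0(X,K_X)=P_1$.

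The genuine content---and the point needing care---is the Cohen--Macaulay hypothesis behind the second part: the vanishings $h^1(X,\cO_X)=h^2(X,\cO_X)=0$ fail for general canonical threefolds, since irregular examples exist, so they really do depend on the Gorenstein structure rather than on canonicity alone. Beyond that, the only subtlety is bookkeeping: ensuring Kawamata--Viehweg vanishing and Serre duality are invoked in the form valid for the reflexive divisorial sheaves $\cO_X(mK_X)$, which need not be invertible because $K_X$ is only $\Q$-Cartier. Both are standard for \qfact\ canonical threefolds, so once they are cited correctly the cohomological arithmetic is immediate.
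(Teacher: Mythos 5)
Your proof is correct and takes essentially the same route as the paper, which states this corollary with its justification inline rather than giving a separate proof: Kawamata--Viehweg vanishing for $m\ge2$, Theorem~\ref{thm!GW} applied to the Cohen--Macaulay ring $R(X,K_X)$ arising from the Gorenstein format construction, and Serre duality to convert the vanishing of $h^1(X,\cO_X)$ and $h^2(X,\cO_X)$ into that of $h^2(X,K_X)$ and $h^1(X,K_X)$. Your observation that the Cohen--Macaulay hypothesis is the genuine content (since irregular canonical threefolds exist) is exactly right, and the only duality statements you actually use are the unproblematic CM-duality cases $h^3(X,K_X)=h^0(X,\cO_X)$ and $h^3(X,\cO_X)=h^0(X,K_X)$, so the broader claim of duality for $\cO_X(mK_X)$ for all $m\in\Z$ is a harmless overreach.
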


We use the plurigenus formula to calculate $K_X^3$ and $K_X\cdot c_2(X)$ in Tables~\ref{tab!codim3} and~\ref{tab!codim5}.

\section{Other formats and varieties}
\subsection{Other formats}\label{sec!other}
We can consider any affine Gorenstein variety that admits some $\C^*$ actions to be a Gorenstein format, following Reid~\cite[1.5]{fun}, so there are very many. We list a few, including those that appear in Table~\ref{tab!summary} below. The point $\tV = V(x_1=\cdots=x_n=0)\subset\C^n$ is a key variety, and regular pullbacks from formats based on this are complete intersections. Qureshi and Szendr{\H o}i~\cite{QS} use quasihomogeneous varieties for Lie groups as formats, extending those of Corti and Reid~\cite{CR}. Other formats that often arise in practice for varieties in codimension four are included in~\cite[\S9]{TJ}; the rolling factors format is described by Stevens~\cite{stevens01}, and is used by Bauer, Catanese and Pignatelli~\cite{BCP} to construct surfaces of general type.

We can take products of formats to make new ones. Given two formats
\[
\tV = V(f_1,\dots,f_s) \subset \C^n
\]
with key weights $\chi=(\chi_1,\dots,\chi_n)$ and Hilbert numerator $N(t)$, and
\[
\widetilde{U} = V(g_1,\dots,g_r) \subset \C^m
\]
with key weights $\psi=(\psi_1,\dots,\psi_m)$ and Hilbert numerator $M(t)$, we can make a format
\[
\widetilde{W} = V(f_1,\dots,f_s,g_1,\dots,g_r) \subset \C^{n+m}
\]
with key weights $(\chi_1,\dots,\chi_n,\psi_1,\dots\psi_m)$ and Hilbert numerator $N(t)\times M(t)$. (We omit the free resolution information here, since we do not need it for the calculations in Table~\ref{tab!summary}.)

For example, the product of $\Gr(2,5)$ and a codimension one complete intersection describes (non-quasilinear) hypersurfaces inside weighted Grassmannian pullbacks, which have six equations and ten first syzygies; in Table~\ref{tab!summary} we denote this format by $\Gr(2,5)\cap H$. Non-special canonical curves of genus six are in this format.

\subsubsection{Orthogonal Grassmannian in codimension five}
We recall the weighted orthogonal Grassmannians of~\cite{CR}, and we list canonical threefolds in this format in Table~\ref{tab!codim5}.

Let $w=(w_1,\dots,w_5)$ as above ($w_i$ all congruent modulo~$\Z$ and have denominator one or two) and positive $u\in\Z$. These parameters will determine certain weights. There are sixteen indeterminates: $x$, $x_1,\dots,x_5$, and $x_{ij}$ for $1\le i<j\le 5$. The ten equations are
\[
x x_i = \Pf_i(M)\qquad\text{ and }\qquad M (x_1,\dots,x_5)^t= (0,\dots,0)^t,
\]
where $M$ is the antisymmetric $5\times 5$ matrix with upper-triangular entries $x_{ij}$, and the signed maximal Pfaffians $\Pf_1(M),\dots,\Pf_5(M)$ of $M$ are
\[
\Pf_i(M) = (-1)^i ( x_{jk}x_{lm} - x_{jl}x_{km} + x_{jm}x_{kl}),
\]
where $\{i,j,k,l,m\} = \{1,\dots,5\}$ and $j<k<l<m$.

These equations are homogeneous with respect to the weights
\[
\wt x = u,\qquad\wt x_i = u+|w|-w_i,\qquad\wt x_{ij}=w_i+w_j+u,
\]
so the ten equations respectively have weights
\[
2u+|w|-w_i\quad\text{ and }\quad 2u+|w|+w_i,\quad\text{ for }i=1,\dots,5.
\]
We may assume that $u=\wt x$ is smallest weight in the format and that $w$ is ordered; these are normalising conditions to prevent duplication of the same format (up to automorphism) for different choices of $u$ and $w$. We enforce that $w_i+w_j> 0$ for all $i$, $j$; in particular, only $w_1$ may be negative.

The ten equations define $\tV=C\!\OGr(5,10)$, the affine cone over the orthogonal Grassmannian; the weights determine a $\C^*$ action on $\tV$. We do not need to know more of the free resolution of the coordinate ring---in the given order, the Jacobian matrix is the matrix of first syzygies---except to note the canonical degree $k$ which is
\[
k_{\tV} = 4|w|+8u.
\]
The first example in Table~\ref{tab!codim5} appears as~\cite[Example~5.1]{CR}. Arguing with Bertini's theorem shows that the first five entries of the table really do exist as claimed. The argument becomes more involved, and we have not verified the remaining cases---although they do intersect the orbifold loci correctly---so they should be treated only as plausible candidates.

\subsection{Other classes of variety}\label{sec!more}
The method we describe can construct examples of other classes of varieties. We describe a few other classes briefly to give an idea both of the flexibility of our approach and its limitations.  Our results are summarised in Table~\ref{tab!summary}; the results themselves are available online via the Graded Ring Database~\cite{grdb}.

\begin{table}[h]
\[
\begin{array}{ccccccccc}
\toprule
\dim & k & \text{codim} & \text{Format} & \text{Reference} & k_{\mathrm{last}} & k_{\mathrm{max}} & \text{\#raw} & \text{\#results} \\
\cmidrule(lr){1-2}\cmidrule(lr){3-5}\cmidrule(lr){6-8}\cmidrule(lr){9-9}
\evnrow 3 & -1 & 1 & \text{c.i.} & \text{\cite{Fletcher}} & 66  & 90 & 95  & 95 \\
\evnrow  &  & 2 & \text{c.i.} & \text{\cite{Fletcher}} & 54 & 124 & 85 & 85 \\
\evnrow && 3 & \text{c.i.} & \text{classical} & 6 & 77 & 1 & 1 \\
\evnrow && 3 & \Gr(2,5) & \text{\cite{A}} & 45 & 70 & 69 & 69 \\
\evnrow && 4 & \Gr(2,5)\cap H & \text{classical} & 7 & 45 & 1 & 1 \\
\evnrow && 5 & \OGr(5,10) & \text{classical} &4&73 & 1 & 1 \\
\oddrow 3 & 0 & 1 & \text{c.i.} & \text{\cite{KS}} &  &  &  & 317 \\
\oddrow  &  & 2 & \text{c.i.} & & 120 & 121 & 419 & 401 \\
\oddrow && 3 & \text{c.i.} & & 74 & 77 & 25 & 22 \\
\oddrow && 3 & \Gr(2,5) & & 71 & 71 & 226 & 187 \\
\oddrow && 4 & \text{c.i.} &\text{classical} & 8 & 32 & 1 & 1 \\
\oddrow && 4 & \Gr(2,5)\cap H & & 39 & 46 & 123 & 14 \\
\oddrow && 5 & \OGr(5,10) &  &44&46 & 23 & 23 \\
\evnrow 3 & 1 & 1 & \text{c.i.} & \text{\cite{Fletcher}} & 46 & 85 & 23 & 23 \\
\evnrow  &  & 2 & \text{c.i.} & \text{\cite{Fletcher}} & 40 & 130 & 66 & 59 \\
\evnrow && 3 & \text{c.i.} & \text{\cite{Fletcher}} & 46 & 80 & 38 & 37 \\
\evnrow && 3 & \Gr(2,5) & & 35 & 71 & 18 & 18 \\
\evnrow && 4 & \text{c.i.} &\text{classical} & 9 & 34 & 1 & 1 \\
\evnrow && 4 & \Gr(2,5)\cap H & & 41& 46 & 84 & 57 \\
\evnrow && 5 & \text{c.i.} &\text{classical} & 10 & 30 & 1 & 1 \\
\evnrow && 5 & \OGr(5,10) &  &32&74 & 21 & 21 \\
\bottomrule
\end{array}
\]
\caption{The number of cases of Fano, Calabi--Yau, and canonical orbifolds in various formats. All were computed allowing isolated canonical quotient singularities. The column $k_{\mathrm{last}}$ gives the largest adjunction number for which a result was found; $k_{\mathrm{max}}$ gives the largest degree searched;  \#raw gives the number of candidates found by the computer;  \#results gives the number of candidates after removing obvious failures.
\label{tab!summary}}
\end{table}

\subsubsection{Understanding Table~\ref{tab!summary}}
The table is generated by a systematic computer search in order of increasing adjunction number $k$.
The search continues until the calculations become unwieldy.
The table indicates this point: $k_{\mathrm{max}}$ is the largest adjunction number up to which the search is complete.
It also records largest adjunction number, denoted~$k_{\mathrm{last}}$, for which a candidate was found.
Table~\ref{tab!summary} records the number of cases found, denoted \#raw.
In a few case, it is easy to see that there cannot be a quasismooth realisation of a candidate.
For example, any 3-fold
\[
X_{6,30} \subset \PP(1,2,3,4,10,15),
\]
must have a singularity at its intersection with the line $\PP(10,15)$:  the equation of degree~6 cannot
provide a tangent monomial there.
The final column \#results records the number of results after removing such cases that obviously fail.

When $k_{\mathrm{max}}$ is much larger than $k_{\mathrm{last}}$---for example, canonical 3-folds in codimensions~3 and~5---it is conceivable that we have found all the results. When the two numbers are close, almost certainly we are only part of the way through the complete list.
For example, a general codimension~4 variety $X\subset \PP(4,5,6,6,7,7,8,9)$ defined by an equation of degree~18 and the maximal Pfaffians of a $5\times 5$ antisymmetric matrix with degrees
\[
\begin{pmatrix}
4 & 5 & 6 & 7\\ &6 & 7 & 8 \\ &&8 & 9\\ &&&10
\end{pmatrix}
\]
is a quasismooth canonical 3-fold with adjunction number $k=53$, which exceeds $k_{\mathrm{max}}$ in this case, and so does not appear in the results on \cite{grdb}.

\subsubsection{Calabi--Yau threefolds}\label{s!CY3}
This is the case $K_X=0$ and $h^1(X,\cO_X)=0$ (the latter being automatic for Gorenstein formats by Theorem~\ref{thm!GW}). One could insist that $X$ be a manifold, but we search more widely amongst orbifolds $X$ with canonical singularities. Since we construct Gorenstein rings, and work on 3-fold orbifolds, the canonical singularities that arise all admit a crepant resolution, so that resulting orbifolds $X$ have a resolution of singularities that is a Calabi--Yau manifold.

There are lots of Calabi--Yau threefolds, and in each format we are still discovering examples at the point that the computer search becomes unreasonably slow. In $\Gr(2,5)$ format, Table~\ref{tab!summary} shows that the search was completed in full up to the value $k_{\mathrm{max}}=71$ of this parameter, but it also shows that the last example was at the value $k_{\mathrm{last}}=71$ of the parameter. No doubt there will be more cases for higher values of the parameter.

Some candidates cannot be realised by an orbifold; these are removed from the raw lists by hand.
In most cases, their failure to be quasismooth occurs on the orbifold loci, so is easy to see.
However, there are a few that are fine at the orbifold locus but singular elsewhere.
For example, $X\subset\PP(1,1,2,5,8,13,19)$ defined with syzygy degrees
\[
\begin{pmatrix}
1 & 2 & 6 & 8\\ &6 & 12 & 14 \\ &&13 & 15\\ &&&19
\end{pmatrix}
\]
must contain the coordinate plane $D=\PP(5,13,19)$: the first two rows and columns
of this matrix necessarily lie in the ideal $I_D$, for reasons of degree.
Any general such threefold $X$ is still a Calabi--Yau 3-fold, but is not $\Q$-factorial,
and has single node lying on $D$.
In the terminology of \cite{TJ}, $D\subset X$ is in Jerry$_{12}$ format, and
following the methods there it can be unprojected to give a quasismooth Calabi--Yau
3-fold $Y\subset\PP(1,1,2,5,8,13,19,37)$, embedded in codimension~4,
with a single $\frac{1}{37}(5,13,19)$ orbifold point: the birational map $X\dashrightarrow Y$
is the small $D$-ample resolution of the node followed by the contraction of $D$
to the orbifold point.
Unlike the results of \cite{TJ} and \cite{Geo}, $X$ cannot be deformed to quasismooth
in its Pfaffian format: $D\subset X$ always appears as Jerry$_{12}$, and $Y$ is
only realised as one deformation family. (As mentioned in \cite{TJ}, Jerry tends
to have higher degree than Tom, so having Jerry with just one node makes it hard for
Tom to exist.)

\subsubsection{Comparison with known lists: the famous 95 and all that}
We recalculated the known classifications of Fano threefolds that arise in the formats we compute. The classical Fano threefolds of Table~\ref{tab!summary} can be found in~\cite{IP}. The famous $95$ hypersurfaces of~\cite{C3f}, the $85$ codimension two complete intersections of Iano--Fletcher~\cite{Fletcher}, and Alt{\i}nok's $69$ codimension three $\Gr(2,5)$ cases all appeared early in their respective searches. (If run for K3 surfaces, the trigonal K3 surface of Example~\ref{eg!zerowt} also appears.)
We find the classical $X_{2,2,2}\subset\PP^6$ in codimension~3, and \cite{CCC} prove that there
are no more Fano complete intersections.
Although we do not list them in the table, we also checked Suzuki's index two Fano threefolds: $26$ in codimension two and two in codimension three in \cite{BS07} Tables~2 and 3.

In higher codimensions, there will be many different formats, and any single format is likely to
realise only a few of the possible varieties.
In codimension~4, \cite{grdb} lists 145 Hilbert series of Fano 3-folds, whereas
the $6\times 10$ codimension~4 format of \S\ref{sec!other} realises only a single family.
The remaining 144 do exist, usually as two or more families: see \cite{Pap,TJ}.
In codimension~5, again the format we demonstrate realises a single family, while \cite{grdb}
lists 164 possible Hilbert series.

Canonical threefolds that arise as complete intersections appear in \cite{Fletcher}, and those lists are proved complete in \cite{CCC}; in particular, there are no examples in codimension~6 or higher. The codimension two and three complete intersections we find include some interesting near misses. Seven of the raw results are elliptic fibrations over rational surfaces, so not of general type, and we removed these by hand (see the columns \#raw and \#results in Table~\ref{tab!summary}). Each one has a hyperquotient singularity of type $\frac{1}{4}(1,1,2,3;2)$ that is not terminal---but it takes more than numerical data to see that.

\subsubsection{Hypersurfaces}
Complete intersections in codimension one illustrate the limitations of this approach. Although we find the famous $95$ easily, there are, also famously, $7555$ quasismooth Calabi--Yau hypersurfaces, of which $317$ have isolated quotient singularities. In theory the algorithm will eventually find all of these $317$ cases, but in practice our code finds only the first $194$ of them before becoming unreasonably slow;
we include this case in Table~\ref{tab!summary} for completeness, but did not calculate it using this method.

There are other specialised algorithms that handle hypersurfaces
more effectively.
To find all 7555 independently of~\cite{KS}, one can use
the well-known `quasismooth hypersurface' algorithm of~\cite{C3f,JK}
that we implement in~\cite{BK}. That algorithm does not require the
singularities to be isolated, but analyses all singular loci.

\subsubsection{Higher index threefolds of general type: the case $\chi = 1$}
The same methods apply to varieties polarised by a Weil divisor $A$
which satisifies $K_X = kA$ for some $k>1$.
Regular canonical threefolds with $\chi > 0$, or equivalently $h^0(X,K_X)=0$, are
fairly rare, but we
can search for them directly by using weights $W$ that do not
include $1$ (or $2,3,\dots$).

For example, setting $k=2$, so that $K_X=2A$, we find
\[
X_{18,35}\subset\PP(5,6,7,9,11,13)
\hbox{ with }
\begin{cases}
P_1=P_2=0, P_3=1\\
\cB = \left\{\frac{1}{3}( 1, 1, 2 ),\frac{1}{11}( 5, 6, 9 ),\frac{1}{13}( 6, 7, 11 )\right\}\\
K_X^3=8/429.
\end{cases}
\]
An example with $K_X=3A$ is given by
\[
X_{60}\subset\PP(4,5,7,11,30)
\hbox{ with }
\begin{cases}
P_1=P_2=0 \hbox{ and $S\in|3K_X|$ is not irreducible}\\
\cB =\left\{\frac{1}{2}( 1, 1, 1 ), 2 \times\frac{1}{5}( 1, 2, 4 ), \frac{1}{7}( 2, 4, 5 ),\frac{1}{11}( 4, 7, 8 )\right\}\\
K_X^3=27/770,
\end{cases}
\]
and similarly with $K_X=4A$ by $X_{42} \subset\PP(5,6,7,9,11)$,
which manages $P_2=0$ despite having three variables in degree $<8$.

\subsection*{Acknowledgements}
This work was supported in part by EPSRC grant EP/E000258/1. The second author is supported by EPSRC grant EP/I008128/1.

\begin{landscape}
\begin{longtable}{>{\hspace{0.5em}}llccccr<{\hspace{0.5em}}}
\caption{Codimension three.} \label{tab!codim3}\\
\toprule
\multicolumn{1}{c}{Variety}&\multicolumn{1}{c}{Basket $\mathcal{B}$}&\multicolumn{1}{c}{$K^3_X$}&\multicolumn{1}{c}{$\chi$}&\multicolumn{1}{c}{$K_Xc_2$}&\multicolumn{1}{c}{$w$}&\multicolumn{1}{c}{Syz weights}\\
\cmidrule(lr){1-1}\cmidrule(lr){2-2}\cmidrule(lr){3-5}\cmidrule(lr){6-6}\cmidrule(lr){7-7}
\endfirsthead
\multicolumn{7}{l}{\vspace{-0.25em}\scriptsize\emph{\tablename\ \thetable{} continued from previous page}}\\
\midrule
\endhead
\multicolumn{7}{r}{\scriptsize\emph{Continued on next page}}\\
\endfoot
\bottomrule
\endlastfoot
\evnrow $\begin{array}{@{}l@{}}\evnrow X_{3^4, 4}\\\quad\subset\PP(1^7)\end{array}$&&$20$&$-6$&$144$&$(0, 1, 1, 1, 1)$&$\footnotesize\begin{matrix} 1&1&1&1\\ &2&2&2\\ &&2&2\\ &&&2 \end{matrix}$\\
\oddrow $\begin{array}{@{}l@{}}\oddrow X_{3^2, 4^3}\\\quad\subset\PP(1^6, 2)\end{array}$&&$14$&$-5$&$120$&$\frac{1}{2}(1, 1, 1, 3, 3)$&$\footnotesize\begin{matrix} 1&1&2&2\\ &1&2&2\\ &&2&2\\ &&&3 \end{matrix}$\\
\evnrow $\begin{array}{@{}l@{}}\evnrow X_{3, 4^3, 5}\\\quad\subset\PP(1^5, 2^2)\end{array}$&$\frac{1}{2}(1, 1, 1)$&$\frac{19}{2}$&$-4$&$\frac{195}{2}$&$(0, 1, 1, 1, 2)$&$\footnotesize\begin{matrix} 1&1&1&2\\ &2&2&3\\ &&2&3\\ &&&3 \end{matrix}$\\
\oddrow $\begin{array}{@{}l@{}}\oddrow X_{4^5}\\\quad\subset\PP(1^5, 2^2)\end{array}$&&$10$&$-4$&$96$&$(1, 1, 1, 1, 1)$&$\footnotesize\begin{matrix} 2&2&2&2\\ &2&2&2\\ &&2&2\\ &&&2 \end{matrix}$\\
\evnrow $\begin{array}{@{}l@{}}\evnrow X_{4^3, 5^2}\\\quad\subset\PP(1^4, 2^3)\end{array}$&$3\times \frac{1}{2}(1, 1, 1)$&$\frac{13}{2}$&$-3$&$\frac{153}{2}$&$\frac{1}{2}(1, 1, 3, 3, 3)$&$\footnotesize\begin{matrix} 1&2&2&2\\ &2&2&2\\ &&3&3\\ &&&3 \end{matrix}$\\
\oddrow $\begin{array}{@{}l@{}}\oddrow X_{4^2, 5^2, 6}\\\quad\subset\PP(1^4, 2^2, 3)\end{array}$&$\frac{1}{2}(1, 1, 1)$&$\frac{11}{2}$&$-3$&$\frac{147}{2}$&$(0, 1, 1, 2, 2)$&$\footnotesize\begin{matrix} 1&1&2&2\\ &2&3&3\\ &&3&3\\ &&&4 \end{matrix}$\\
\evnrow $\begin{array}{@{}l@{}}\evnrow X_{4, 5^2, 6^2}\\\quad\subset\PP(1^3, 2^3, 3)\end{array}$&$5\times \frac{1}{2}(1, 1, 1)$&$\frac{7}{2}$&$-2$&$\frac{111}{2}$&$\frac{1}{2}(1, 1, 3, 3, 5)$&$\footnotesize\begin{matrix} 1&2&2&3\\ &2&2&3\\ &&3&4\\ &&&4 \end{matrix}$\\
\oddrow $\begin{array}{@{}l@{}}\oddrow X_{4, 5, 6^2, 7}\\\quad\subset\PP(1^3, 2^2, 3^2)\end{array}$&$\frac{1}{2}(1, 1, 1), \frac{1}{3}(1, 2, 2)$&$\frac{17}{6}$&$-2$&$\frac{313}{6}$&$(0, 1, 1, 2, 3)$&$\footnotesize\begin{matrix} 1&1&2&3\\ &2&3&4\\ &&3&4\\ &&&5 \end{matrix}$\\
\evnrow $\begin{array}{@{}l@{}}\evnrow X_{5^2, 6^3}\\\quad\subset\PP(1^3, 2^2, 3^2)\end{array}$&$2\times \frac{1}{2}(1, 1, 1)$&$3$&$-2$&$51$&$(1, 1, 1, 2, 2)$&$\footnotesize\begin{matrix} 2&2&3&3\\ &2&3&3\\ &&3&3\\ &&&4 \end{matrix}$\\
\oddrow $\begin{array}{@{}l@{}}\oddrow X_{5, 6^3, 7}\\\quad\subset\PP(1^2, 2^3, 3^2)\end{array}$&$7\times \frac{1}{2}(1, 1, 1), \frac{1}{3}(1, 2, 2)$&$\frac{11}{6}$&$-1$&$\frac{223}{6}$&$\frac{1}{2}(1, 3, 3, 3, 5)$&$\footnotesize\begin{matrix} 2&2&2&3\\ &3&3&4\\ &&3&4\\ &&&4 \end{matrix}$\\
\evnrow $\begin{array}{@{}l@{}}\evnrow X_{6^3, 7^2}\\\quad\subset\PP(1^2, 2^2, 3^3)\end{array}$&$\frac{1}{2}(1, 1, 1), 3\times \frac{1}{3}(1, 2, 2)$&$\frac{3}{2}$&$-1$&$\frac{67}{2}$&$(1, 1, 2, 2, 2)$&$\footnotesize\begin{matrix} 2&3&3&3\\ &3&3&3\\ &&4&4\\ &&&4 \end{matrix}$\\
\oddrow $\begin{array}{@{}l@{}}\oddrow X_{6^2, 7^2, 8}\\\quad\subset\PP(1^2, 2^2, 3^2, 4)\end{array}$&$4\times \frac{1}{2}(1, 1, 1), \frac{1}{3}(1, 2, 2)$&$\frac{4}{3}$&$-1$&$\frac{98}{3}$&$\frac{1}{2}(1, 3, 3, 5, 5)$&$\footnotesize\begin{matrix} 2&2&3&3\\ &3&4&4\\ &&4&4\\ &&&5 \end{matrix}$\\
\evnrow $\begin{array}{@{}l@{}}\evnrow X_{6, 7, 8, 9, 10}\\\quad\subset\PP(1^2, 2, 3^2, 4, 5)\end{array}$&$\frac{1}{2}(1, 1, 1), \frac{1}{3}(1, 2, 2)$&$\frac{5}{6}$&$-1$&$\frac{169}{6}$&$(0, 1, 2, 3, 4)$&$\footnotesize\begin{matrix} 1&2&3&4\\ &3&4&5\\ &&5&6\\ &&&7 \end{matrix}$\\
\oddrow $\begin{array}{@{}l@{}}\oddrow X_{7, 8^2, 9, 10}\\\quad\subset\PP(1, 2^2, 3^2, 4, 5)\end{array}$&$7\times \frac{1}{2}(1, 1, 1), 3\times \frac{1}{3}(1, 2, 2)$&$\frac{1}{2}$&$0$&$\frac{37}{2}$&$\frac{1}{2}(1, 3, 5, 5, 7)$&$\footnotesize\begin{matrix} 2&3&3&4\\ &4&4&5\\ &&5&6\\ &&&6 \end{matrix}$\\
\evnrow $\begin{array}{@{}l@{}}\evnrow X_{8, 9^2, 10^2}\\\quad\subset\PP(1, 2, 3^2, 4^2, 5)\end{array}$&$3\times \frac{1}{2}(1, 1, 1), \frac{1}{3}(1, 2, 2), 2\times \frac{1}{4}(1, 3, 3)$&$\frac{1}{3}$&$0$&$\frac{44}{3}$&$\frac{1}{2}(3, 3, 5, 5, 7)$&$\footnotesize\begin{matrix} 3&4&4&5\\ &4&4&5\\ &&5&6\\ &&&6 \end{matrix}$\\
\oddrow $\begin{array}{@{}l@{}}\oddrow X_{8, 9, 10^2, 11}\\\quad\subset\PP(1, 2, 3^2, 4, 5^2)\end{array}$&$\frac{1}{2}(1, 1, 1), 3\times \frac{1}{3}(1, 2, 2), \frac{1}{5}(2, 3, 4)$&$\frac{3}{10}$&$0$&$\frac{143}{10}$&$(1, 2, 2, 3, 4)$&$\footnotesize\begin{matrix} 3&3&4&5\\ &4&5&6\\ &&5&6\\ &&&7 \end{matrix}$\\
\evnrow $\begin{array}{@{}l@{}}\evnrow X_{12, 13, 14, 15, 16}\\\quad\subset\PP(1, 3, 4, 5, 6, 7, 8)\end{array}$&$\frac{1}{2}(1, 1, 1), \frac{1}{3}(1, 2, 2), \frac{1}{4}(1, 3, 3)$&$\frac{1}{12}$&$0$&$\frac{95}{12}$&$\frac{1}{2}(3, 5, 7, 9, 11)$&$\footnotesize\begin{matrix} 4&5&6&7\\ &6&7&8\\ &&8&9\\ &&&10 \end{matrix}$\\
\oddrow $\begin{array}{@{}l@{}}\oddrow X_{12, 13, 14, 15, 16}\\\quad\subset\PP(3, 4^2, 5^2, 6, 7)\end{array}$&$\begin{array}{@{}l@{}}2\times \frac{1}{2}(1, 1, 1), \frac{1}{3}(1, 2, 2), 2\times \frac{1}{4}(1, 3, 3),\\\quad 2\times \frac{1}{5}(1, 4, 4), \frac{1}{5}(2, 3, 4)\end{array}$&$\frac{1}{30}$&$1$&$\frac{107}{30}$&$\frac{1}{2}(3, 5, 7, 9, 11)$&$\footnotesize\begin{matrix} 4&5&6&7\\ &6&7&8\\ &&8&9\\ &&&10 \end{matrix}$\\
\end{longtable}
\begin{longtable}{>{\hspace{0.5em}}llcccccr<{\hspace{0.5em}}}
\caption{Codimension five.} \label{tab!codim5}\\
\toprule
\multicolumn{1}{c}{Variety}&\multicolumn{1}{c}{Basket $\mathcal{B}$}&\multicolumn{1}{c}{$K^3_X$}&\multicolumn{1}{c}{$\chi$}&\multicolumn{1}{c}{$K_Xc_2$}&\multicolumn{1}{c}{$u$ and $w$}&\multicolumn{2}{c}{Variable weights $x,x_i,x_{ij}$}\\
\cmidrule(lr){1-1}\cmidrule(lr){2-2}\cmidrule(lr){3-5}\cmidrule(lr){6-6}\cmidrule(lr){7-8}
\endfirsthead
\multicolumn{8}{l}{\vspace{-0.25em}\scriptsize\emph{\tablename\ \thetable{} continued from previous page}}\\
\midrule
\endhead
\multicolumn{8}{r}{\scriptsize\emph{Continued on next page}}\\
\endfoot
\bottomrule
\endlastfoot
\evnrow $\begin{array}{@{}l@{}}\evnrow X_{2, 3^8, 4}\\\quad\subset\PP(1^7, 2^2)\end{array}$&$2\times \frac{1}{2}(1, 1, 1)$&$21$&$-6$&$147$&$\begin{array}{@{}c@{}}1\\(0, 0, 0, 0, 1)\end{array}$&$\begin{array}{@{}c@{}}1\\ 2, 2, 2, 2, 1 \end{array}$&$\footnotesize\begin{matrix} 1&1&1&2\\ &1&1&2\\ &&1&2\\ &&&2 \end{matrix}$\\
\oddrow $\begin{array}{@{}l@{}}\oddrow X_{3^5, 4^5}\\\quad\subset\PP(1^5, 2^4)\end{array}$&$5\times \frac{1}{2}(1, 1, 1)$&$\frac{23}{2}$&$-4$&$\frac{207}{2}$&$\begin{array}{@{}c@{}}1\\\frac{1}{2}(-1, 1, 1, 1, 1)\end{array}$&$\begin{array}{@{}c@{}}1\\ 3, 2, 2, 2, 2 \end{array}$&$\footnotesize\begin{matrix} 1&1&1&1\\ &2&2&2\\ &&2&2\\ &&&2 \end{matrix}$\\
\evnrow $\begin{array}{@{}l@{}}\evnrow X_{3^5, 4^5}\\\quad\subset\PP(1^6, 2^2, 3)\end{array}$&$\frac{1}{3}(1, 2, 2)$&$\frac{46}{3}$&$-5$&$\frac{368}{3}$&$\begin{array}{@{}c@{}}1\\\frac{1}{2}(-1, 1, 1, 1, 1)\end{array}$&$\begin{array}{@{}c@{}}1\\ 3, 2, 2, 2, 2 \end{array}$&$\footnotesize\begin{matrix} 1&1&1&1\\ &2&2&2\\ &&2&2\\ &&&2 \end{matrix}$\\
\oddrow $\begin{array}{@{}l@{}}\oddrow X_{3^2, 4^6, 5^2}\\\quad\subset\PP(1^4, 2^4, 3)\end{array}$&$4\times \frac{1}{2}(1, 1, 1), \frac{1}{3}(1, 2, 2)$&$\frac{22}{3}$&$-3$&$\frac{242}{3}$&$\begin{array}{@{}c@{}}1\\(0, 0, 0, 1, 1)\end{array}$&$\begin{array}{@{}c@{}}1\\ 3, 3, 3, 2, 2 \end{array}$&$\footnotesize\begin{matrix} 1&1&2&2\\ &1&2&2\\ &&2&2\\ &&&3 \end{matrix}$\\
\evnrow $\begin{array}{@{}l@{}}\evnrow X_{4^{10}}\\\quad\subset\PP(1^3, 2^6)\end{array}$&$12\times \frac{1}{2}(1, 1, 1)$&$6$&$-2$&$66$&$\begin{array}{@{}c@{}}2\\(0, 0, 0, 0, 0)\end{array}$&$\begin{array}{@{}c@{}}2\\ 2, 2, 2, 2, 2 \end{array}$&$\footnotesize\begin{matrix} 2&2&2&2\\ &2&2&2\\ &&2&2\\ &&&2 \end{matrix}$\\
\oddrow $\begin{array}{@{}l@{}}\oddrow X_{4^3, 5^4, 6^3}\\\quad\subset\PP(1^3, 2^3, 3^2, 4)\end{array}$&$3\times \frac{1}{2}(1, 1, 1), \frac{1}{4}(1, 3, 3)$&$\frac{15}{4}$&$-2$&$\frac{225}{4}$&$\begin{array}{@{}c@{}}1\\(0, 0, 1, 1, 1)\end{array}$&$\begin{array}{@{}c@{}}1\\ 4, 4, 3, 3, 3 \end{array}$&$\footnotesize\begin{matrix} 1&2&2&2\\ &2&2&2\\ &&3&3\\ &&&3 \end{matrix}$\\
\evnrow $\begin{array}{@{}l@{}}\evnrow X_{4^3, 5^4, 6^3}\\\quad\subset\PP(1^2, 2^4, 3^3)\end{array}$&$7\times \frac{1}{2}(1, 1, 1), 3\times \frac{1}{3}(1, 2, 2)$&$\frac{5}{2}$&$-1$&$\frac{85}{2}$&$\begin{array}{@{}c@{}}1\\(0, 0, 1, 1, 1)\end{array}$&$\begin{array}{@{}c@{}}1\\ 4, 4, 3, 3, 3 \end{array}$&$\footnotesize\begin{matrix} 1&2&2&2\\ &2&2&2\\ &&3&3\\ &&&3 \end{matrix}$\\
\oddrow $\begin{array}{@{}l@{}}\oddrow X_{4, 5^2, 6^4, 7^2, 8}\\\quad\subset\PP(1^2, 2^3, 3^2, 4, 5)\end{array}$&$6\times \frac{1}{2}(1, 1, 1), \frac{1}{5}(2, 3, 4)$&$\frac{9}{5}$&$-1$&$\frac{189}{5}$&$\begin{array}{@{}c@{}}1\\(0, 0, 1, 1, 2)\end{array}$&$\begin{array}{@{}c@{}}1\\ 5, 5, 4, 4, 3 \end{array}$&$\footnotesize\begin{matrix} 1&2&2&3\\ &2&2&3\\ &&3&4\\ &&&4 \end{matrix}$\\
\evnrow $\begin{array}{@{}l@{}}\evnrow X_{4, 5^2, 6^4, 7^2, 8}\\\quad\subset\PP(1^2, 2^2, 3^3, 4^2)\end{array}$&$2\times \frac{1}{2}(1, 1, 1), 2\times \frac{1}{4}(1, 3, 3)$&$\frac{3}{2}$&$-1$&$\frac{69}{2}$&$\begin{array}{@{}c@{}}1\\(0, 0, 1, 1, 2)\end{array}$&$\begin{array}{@{}c@{}}1\\ 5, 5, 4, 4, 3 \end{array}$&$\footnotesize\begin{matrix} 1&2&2&3\\ &2&2&3\\ &&3&4\\ &&&4 \end{matrix}$\\
\oddrow $\begin{array}{@{}l@{}}\oddrow X_{4, 5^2, 6^4, 7^2, 8}\\\quad\subset\PP(1, 2^3, 3^4, 4)\end{array}$&$6\times \frac{1}{2}(1, 1, 1), 6\times \frac{1}{3}(1, 2, 2)$&$1$&$0$&$25$&$\begin{array}{@{}c@{}}1\\(0, 0, 1, 1, 2)\end{array}$&$\begin{array}{@{}c@{}}1\\ 5, 5, 4, 4, 3 \end{array}$&$\footnotesize\begin{matrix} 1&2&2&3\\ &2&2&3\\ &&3&4\\ &&&4 \end{matrix}$\\
\evnrow $\begin{array}{@{}l@{}}\evnrow X_{5^2, 6^6, 7^2}\\\quad\subset\PP(1, 2^3, 3^4, 4)\end{array}$&$\begin{array}{@{}l@{}}7\times \frac{1}{2}(1, 1, 1), 4\times \frac{1}{3}(1, 2, 2),\\\quad \frac{1}{4}(1, 3, 3)\end{array}$&$\frac{13}{12}$&$0$&$\frac{299}{12}$&$\begin{array}{@{}c@{}}2\\(0, 0, 0, 1, 1)\end{array}$&$\begin{array}{@{}c@{}}2\\ 4, 4, 4, 3, 3 \end{array}$&$\footnotesize\begin{matrix} 2&2&3&3\\ &2&3&3\\ &&3&3\\ &&&4 \end{matrix}$\\
\oddrow $\begin{array}{@{}l@{}}\oddrow X_{6^3, 7^4, 8^3}\\\quad\subset\PP(1, 2^2, 3^3, 4^2, 5)\end{array}$&$\begin{array}{@{}l@{}}5\times \frac{1}{2}(1, 1, 1), 3\times \frac{1}{3}(1, 2, 2),\\\quad \frac{1}{5}(1, 4, 4)\end{array}$&$\frac{7}{10}$&$0$&$\frac{203}{10}$&$\begin{array}{@{}c@{}}2\\(0, 0, 1, 1, 1)\end{array}$&$\begin{array}{@{}c@{}}2\\ 5, 5, 4, 4, 4 \end{array}$&$\footnotesize\begin{matrix} 2&3&3&3\\ &3&3&3\\ &&4&4\\ &&&4 \end{matrix}$\\
\evnrow $\begin{array}{@{}l@{}}\evnrow X_{6, 7^2, 8^4, 9^2, 10}\\\quad\subset\PP(2^2, 3^3, 4^2, 5^2)\end{array}$&$\begin{array}{@{}l@{}}8\times \frac{1}{2}(1, 1, 1), 5\times \frac{1}{3}(1, 2, 2), \\\quad 2\times \frac{1}{5}(2, 3, 4)\end{array}$&$\frac{4}{15}$&$1$&$\frac{164}{15}$&$\begin{array}{@{}c@{}}2\\(0, 0, 1, 1, 2)\end{array}$&$\begin{array}{@{}c@{}}2\\ 6, 6, 5, 5, 4 \end{array}$&$\footnotesize\begin{matrix} 2&3&3&4\\ &3&3&4\\ &&4&5\\ &&&5 \end{matrix}$\\
\oddrow $\begin{array}{@{}l@{}}\oddrow X_{6, 7^2, 8^4, 9^2, 10}\\\quad\subset\PP(1, 2, 3^2, 4^3, 5^2)\end{array}$&$4\times \frac{1}{2}(1, 1, 1), 2\times \frac{1}{5}(1, 4, 4)$&$\frac{2}{5}$&$0$&$\frac{78}{5}$&$\begin{array}{@{}c@{}}2\\(0, 0, 1, 1, 2)\end{array}$&$\begin{array}{@{}c@{}}2\\ 6, 6, 5, 5, 4 \end{array}$&$\footnotesize\begin{matrix} 2&3&3&4\\ &3&3&4\\ &&4&5\\ &&&5 \end{matrix}$\\
\evnrow $\begin{array}{@{}l@{}}\evnrow X_{6, 7, 8^2, 9^2, 10^2, 11, 12}\\\quad\subset\PP(1, 2, 3^2, 4^2, 5, 6, 7)\end{array}$&$\begin{array}{@{}l@{}}3\times \frac{1}{2}(1, 1, 1), \frac{1}{4}(1, 3, 3),\\\quad \frac{1}{7}(3, 4, 6)\end{array}$&$\frac{9}{28}$&$0$&$\frac{423}{28}$&$\begin{array}{@{}c@{}}1\\(0, 1, 1, 2, 3)\end{array}$&$\begin{array}{@{}c@{}}1\\ 8, 7, 7, 6, 5 \end{array}$&$\footnotesize\begin{matrix} 2&2&3&4\\ &3&4&5\\ &&4&5\\ &&&6 \end{matrix}$\\
\oddrow $\begin{array}{@{}l@{}}\oddrow X_{8^3, 9^4, 10^3}\\\quad\subset\PP(2, 3^2, 4^3, 5^3)\end{array}$&$\begin{array}{@{}l@{}}4\times \frac{1}{2}(1, 1, 1), 3\times \frac{1}{4}(1, 3, 3), \\\quad 3\times \frac{1}{5}(2, 3, 4)\end{array}$&$\frac{3}{20}$&$1$&$\frac{153}{20}$&$\begin{array}{@{}c@{}}3\\(0, 0, 1, 1, 1)\end{array}$&$\begin{array}{@{}c@{}}3\\ 6, 6, 5, 5, 5 \end{array}$&$\footnotesize\begin{matrix} 3&4&4&4\\ &4&4&4\\ &&5&5\\ &&&5 \end{matrix}$\\
\evnrow $\begin{array}{@{}l@{}}\evnrow X_{8, 9^2, 10^4, 11^2, 12}\\\quad\subset\PP(2, 3^2, 4^2, 5^2, 6, 7)\end{array}$&$\begin{array}{@{}l@{}}4\times \frac{1}{2}(1, 1, 1), 4\times \frac{1}{3}(1, 2, 2),\\\quad 2\times \frac{1}{4}(1, 3, 3), \frac{1}{7}(2, 5, 6)\end{array}$&$\frac{5}{42}$&$1$&$\frac{295}{42}$&$\begin{array}{@{}c@{}}3\\(0, 0, 1, 1, 2)\end{array}$&$\begin{array}{@{}c@{}}3\\ 7, 7, 6, 6, 5 \end{array}$&$\footnotesize\begin{matrix} 3&4&4&5\\ &4&4&5\\ &&5&6\\ &&&6 \end{matrix}$\\
\oddrow $\begin{array}{@{}l@{}}\oddrow X_{8, 9, 10^2, 11^2, 12^2, 13, 14}\\\quad\subset\PP(2, 3^2, 4, 5^2, 6, 7, 8)\end{array}$&$\begin{array}{@{}l@{}}3\times \frac{1}{2}(1, 1, 1), 5\times \frac{1}{3}(1, 2, 2),\\\quad \frac{1}{5}(2, 3, 4), \frac{1}{8}(3, 5, 7)\end{array}$&$\frac{11}{120}$&$1$&$\frac{781}{120}$&$\begin{array}{@{}c@{}}2\\(0, 1, 1, 2, 3)\end{array}$&$\begin{array}{@{}c@{}}2\\ 9, 8, 8, 7, 6 \end{array}$&$\footnotesize\begin{matrix} 3&3&4&5\\ &4&5&6\\ &&5&6\\ &&&7 \end{matrix}$\\
\evnrow $\begin{array}{@{}l@{}}\evnrow X_{10, 11^2, 12^4, 13^2, 14}\\\quad\subset\PP(3, 4^2, 5^2, 6^2, 7^2)\end{array}$&$\begin{array}{@{}l@{}}3\times \frac{1}{2}(1, 1, 1), 2\times \frac{1}{5}(1, 4, 4),\\\quad 2\times \frac{1}{7}(3, 4, 6)\end{array}$&$\frac{3}{70}$&$1$&$\frac{267}{70}$&$\begin{array}{@{}c@{}}4\\(0, 0, 1, 1, 2)\end{array}$&$\begin{array}{@{}c@{}}4\\ 8, 8, 7, 7, 6 \end{array}$&$\footnotesize\begin{matrix} 4&5&5&6\\ &5&5&6\\ &&6&7\\ &&&7 \end{matrix}$\\
\oddrow $\begin{array}{@{}l@{}}\oddrow X_{12, 13, 14^2, 15^2, 16^2, 17, 18}\\\quad\subset\PP(3, 4, 5, 6, 7^2, 8, 9, 10)\end{array}$&$\begin{array}{@{}l@{}}\frac{1}{2}(1, 1, 1), \frac{1}{4}(1, 3, 3), \frac{1}{5}(2, 3, 4),\\\quad \frac{1}{7}(3, 4, 6), \frac{1}{10}(3, 7, 9)\end{array}$&$\frac{3}{140}$&$1$&$\frac{393}{140}$&$\begin{array}{@{}c@{}}4\\(0, 1, 1, 2, 3)\end{array}$&$\begin{array}{@{}c@{}}4\\ 11, 10, 10, 9, 8 \end{array}$&$\footnotesize\begin{matrix} 5&5&6&7\\ &6&7&8\\ &&7&8\\ &&&9 \end{matrix}$\\
\evnrow $\begin{array}{@{}l@{}}\evnrow X_{12, 13, 14, 15, 16^2, 17, 18, 19, 20}\\\quad\subset\PP(3, 4, 5, 6, 7, 8, 9, 10, 11)\end{array}$&$\begin{array}{@{}l@{}}2\times \frac{1}{2}(1, 1, 1), 3\times \frac{1}{3}(1, 2, 2),\\\quad \frac{1}{5}(1, 4, 4), \frac{1}{11}(4, 7, 10)\end{array}$&$\frac{1}{55}$&$1$&$\frac{149}{55}$&$\begin{array}{@{}c@{}}3\\(0, 1, 2, 3, 4)\end{array}$&$\begin{array}{@{}c@{}}3\\ 13, 12, 11, 10, 9 \end{array}$&$\footnotesize\begin{matrix} 4&5&6&7\\ &6&7&8\\ &&8&9\\ &&&10 \end{matrix}$\\
\end{longtable}
\end{landscape}
\bibliographystyle{amsalpha}

\begin{thebibliography}{BKR12}

\bibitem[Alt05]{A}
Selma Alt{\i}nok, \emph{Constructing new {$K3$} surfaces}, Turkish J. Math.
  \textbf{29} (2005), no.~2, 175--192.

\bibitem[BCP97]{magma}
Wieb Bosma, John Cannon, and Catherine Playoust, \emph{The {M}agma algebra
  system. {I}. {T}he user language}, J. Symbolic Comput. \textbf{24} (1997),
  no.~3-4, 235--265, Computational algebra and number theory (London, 1993).

\bibitem[BCP06]{BCP}
Ingrid~C. Bauer, Fabrizio Catanese, and Roberto Pignatelli, \emph{The moduli
  space of surfaces with {$K\sp 2=6$} and {$p\sb g=4$}}, Math. Ann.
  \textbf{336} (2006), no.~2, 421--438.

\bibitem[Ber09]{bertin}
Marie-Am{\'e}lie Bertin, \emph{Examples of {C}alabi-{Y}au 3-folds of {$\Bbb
  P^7$} with {$\rho=1$}}, Canad. J. Math. \textbf{61} (2009), no.~5,
  1050--1072.

\bibitem[BK]{grdb}
Gavin Brown and Alexander~M. Kasprzyk, \emph{The graded ring database}, online,
  access via
  \href{http://grdb.lboro.ac.uk/}{\texttt{http://grdb.lboro.ac.uk/}}.

\bibitem[BK14]{BK}
\bysame, \emph{Orbifold projective hypersurfaces in dimension $\le 4$}, 2014.

\bibitem[BKR12]{TJ}
Gavin Brown, Michael Kerber, and Miles Reid, \emph{Fano 3-folds in codimension
  4, {T}om and {J}erry. {P}art {I}}, Compos. Math. \textbf{148} (2012), no.~4,
  1171--1194.

\bibitem[BRZ13]{BRZ}
A.~Buckley, M.~Reid, and S.~Zhou, \emph{Ice cream and orbifold
  {R}iemann-{R}och}, Izv. Ross. Akad. Nauk Ser. Mat. \textbf{77} (2013), no.~3,
  29--54.

\bibitem[BS07]{BS07}
Gavin Brown and Kaori Suzuki, \emph{Fano 3-folds with divisible anticanonical
  class}, Manuscripta Math. \textbf{123} (2007), no.~1, 37--51.

\bibitem[CCC11]{CCC}
Jheng-Jie Chen, Jungkai~A. Chen, and Meng Chen, \emph{On quasismooth weighted
  complete intersections}, J. Algebraic Geom. \textbf{20} (2011), no.~2,
  239--262.

\bibitem[CLS90]{CLS}
P.~Candelas, M.~Lynker, and R.~Schimmrigk, \emph{Calabi-{Y}au manifolds in
  weighted {${\bf P}\sb 4$}}, Nuclear Phys. B \textbf{341} (1990), no.~2,
  383--402.

\bibitem[CR00]{CR00}
Alessio Corti and Miles Reid (eds.), \emph{Explicit birational geometry of
  3-folds}, London Mathematical Society Lecture Note Series, vol. 281,
  Cambridge University Press, Cambridge, 2000.

\bibitem[CR02]{CR}
\bysame, \emph{Weighted {G}rassmannians}, Algebraic geometry, de Gruyter,
  Berlin, 2002, pp.~141--163.

\bibitem[Fle87]{F87}
A.~R. Fletcher, \emph{Contributions to {R}iemann-{R}och on projective
  {$3$}-folds with only canonical singularities and applications}, Algebraic
  geometry, {B}owdoin, 1985 ({B}runswick, {M}aine, 1985), Proc. Sympos. Pure
  Math., vol.~46, Amer. Math. Soc., Providence, RI, 1987, pp.~221--231.

\bibitem[Geo14]{Geo}
Konstantin Georgiadis, \emph{Polarized {C}alabi--{Y}au 3-folds in codimension
  4}, Ph.D. thesis, Loughborough University, 2014, in preparation.

\bibitem[GW78]{GW}
Shiro Goto and Keiichi Watanabe, \emph{On graded rings. {I}}, J. Math. Soc.
  Japan \textbf{30} (1978), no.~2, 179--213.

\bibitem[IF00]{Fletcher}
A.~R. Iano-Fletcher, \emph{Working with weighted complete intersections},
  Explicit birational geometry of 3-folds, London Math. Soc. Lecture Note Ser.,
  vol. 281, Cambridge Univ. Press, Cambridge, 2000, pp.~101--173.

\bibitem[IP99]{IP}
V.~A. Iskovskikh and Yu.~G. Prokhorov, \emph{Fano varieties}, Algebraic
  geometry, {V}, Encyclopaedia Math. Sci., vol.~47, Springer, Berlin, 1999,
  pp.~1--247.

\bibitem[JK01]{JK}
Jennifer~M. Johnson and J{\'a}nos Koll{\'a}r, \emph{Fano hypersurfaces in
  weighted projective 4-spaces}, Experiment. Math. \textbf{10} (2001), no.~1,
  151--158.

\bibitem[Kaw86]{K86}
Yujiro Kawamata, \emph{On the plurigenera of minimal algebraic {$3$}-folds with
  {$K\equiv~0$}.}, Math. Ann. \textbf{275} (1986), no.~4, 539--546.

\bibitem[KS00]{KS}
Maximilian Kreuzer and Harald Skarke, \emph{Complete classification of
  reflexive polyhedra in four dimensions}, Adv. Theor. Math. Phys. \textbf{4}
  (2000), no.~6, 1209--1230.

\bibitem[Mor85]{mori}
Shigefumi Mori, \emph{On {$3$}-dimensional terminal singularities}, Nagoya
  Math. J. \textbf{98} (1985), 43--66.

\bibitem[Pap08]{Pap}
Stavros~Argyrios Papadakis, \emph{The equations of type {$\rm II\sb 1$}
  unprojection}, J. Pure Appl. Algebra \textbf{212} (2008), no.~10, 2194--2208.

\bibitem[Per87]{persson}
Ulf Persson, \emph{An introduction to the geography of surfaces of general
  type}, Algebraic geometry, {B}owdoin, 1985 ({B}runswick, {M}aine, 1985),
  Proc. Sympos. Pure Math., vol.~46, Amer. Math. Soc., Providence, RI, 1987,
  pp.~195--218.

\bibitem[QS12]{QS}
Muhammad~Imran Qureshi and Bal{\'a}zs Szendr{\H{o}}i, \emph{Calabi-{Y}au
  threefolds in weighted flag varieties}, Adv. High Energy Phys. (2012),
  Article ID 547317, 14 pages.

\bibitem[Rei80]{C3f}
Miles Reid, \emph{Canonical {$3$}-folds}, Journ\'ees de {G}\'eometrie
  {A}lg\'ebrique d'{A}ngers, {J}uillet 1979/{A}lgebraic {G}eometry, {A}ngers,
  1979, Sijthoff \& Noordhoff, Alphen aan den Rijn, 1980, pp.~273--310.

\bibitem[Rei83]{reidminimal}
\bysame, \emph{Minimal models of canonical {$3$}-folds}, Algebraic varieties
  and analytic varieties ({T}okyo, 1981), Adv. Stud. Pure Math., vol.~1,
  North-Holland, Amsterdam, 1983, pp.~131--180.

\bibitem[Rei89]{dicksreid}
\bysame, \emph{Surfaces with $p_g = 3$, $k^2 = 4$ according to {E}. {H}orikawa
  and {D}. {D}icks}, Proc. of Alg. Geometry mini Symposium, Tokyo Univ. (1989),
  1---22.

\bibitem[Rei11]{fun}
\bysame, \emph{Fun in codimension~$4$}, preprint available online via the
  author's webpage, 2011.

\bibitem[Sel14]{selig}
Michael Selig, Ph.D. thesis, University of Warwick, 2014, in preparation.

\bibitem[Ste01]{stevens01}
Jan Stevens, \emph{Rolling factors deformations and extensions of canonical
  curves}, Doc. Math. \textbf{6} (2001), 185--226 (electronic).

\bibitem[Ste03]{stevens}
\bysame, \emph{Deformations of singularities}, Lecture Notes in Mathematics,
  vol. 1811, Springer-Verlag, Berlin, 2003.

\bibitem[Ton04]{tonoli}
Fabio Tonoli, \emph{Construction of {C}alabi-{Y}au 3-folds in {$\Bbb P^6$}}, J.
  Algebraic Geom. \textbf{13} (2004), no.~2, 209--232.

\end{thebibliography}
\providecommand{\bysame}{\leavevmode\hbox to3em{\hrulefill}\thinspace}
\providecommand{\MR}{\relax\ifhmode\unskip\space\fi MR }
\providecommand{\MRhref}[2]{%
  \href{http://www.ams.org/mathscinet-getitem?mr=#1}{#2}
}
\providecommand{\href}[2]{#2}

\end{document}